\newtheorem{assum}{Assumption}
\newtheorem{lem}{Lemma}
\newtheorem{thm}{Theorem}
\newtheorem{rem}{Remark}
\newtheorem{defn}{Definition}
\def\BibTeX{{\rm B\kern-.05em{\sc i\kern-.025em b}\kern-.08em
    T\kern-.1667em\lower.7ex\hbox{E}\kern-.125emX}}
\begin{document}
\title{Multi/Single-stage structured zero-gradient-sum approach for prescribed-time optimization}
\author{Shuaiyu Zhou, Yiheng Wei, Jinde Cao, and Yang Liu
\thanks{Shuaiyu Zhou, Yiheng Wei and Jinde Cao are with the School of Mathematics, Southeast University, Nanjing 211189, China. J.D. Cao is also with the Yonsei Frontier Lab, Yonsei University, Seoul 03722, South Korea (e-mail: sy\_zhou@seu.edu.cn; neudawei@seu.edu.cn; jdcao@seu.edu.cn). }
\thanks{Yang Liu is with the Key Laboratory of Intelligent Education Technology and Application of Zhejiang Province, School of Mathematical Sciences, Zhejiang Normal University, Jinhua 321004, China (e-mail: liuyang@zjnu.edu.cn)}
}
\maketitle

\begin{abstract}
Prescribed-time convergence mechanism has become a prominent research focus in the current field of optimization and control due to its ability to precisely control the target completion time. The recently arisen prescribed-time algorithms for distributed optimization, currently necessitate multi-stage structures to achieve global convergence. This paper introduces two modified zero-gradient-sum algorithms, each based on a multi-stage and a single-stage structural frameworks established in this work. These algorithms are designed to achieve prescribed-time convergence and relax two common yet stringent conditions. This work also bridges the gap in current research on single-stage structured PTDO algorithm. The excellent convergence performance of the proposed algorithms is validated through a case study. 
\end{abstract}

\begin{IEEEkeywords}
	Consensus, Distributed convex optimization, Prescribed-time convergence, Sliding mode control, Zero-gradient-sum.    
\end{IEEEkeywords}

\section{Introduction}\label{Section 1}
Distributed optimization, focusing on achieving global objectives through cooperation between multi-agent systems, has been extensively researched in the past decade \cite{Yang:2019ARC,Huang:2022Automatica}. Existing studies primarily tackled distributed optimization for convex functions, particularly gradient-descent based methods \cite{Nedic:2009TAC,Zhu:2011TAC,Nedic:2014TAC}. However, their reliance on local gradients necessitated diminishing step-sizes for global exact convergence, leading to slower convergence rates. Subsequently, new algorithms, such as proportional-integral based methods\cite{Kia:2015Automatica,Wang:2023TAC}, gradient tracking \cite{Nedic:2017SIAM,Xi:2017TAC,Xin:2019TAC,Shi:2023TAC}, and zero-gradient-sum (ZGS) \cite{Lu:2012TAC,Wu:2021TSMCS,Chen:2023ISAS}, were introduced to achieve global exact convergence with fixed step-sizes.

However, most of these algorithms have unbounded theoretical convergence times, limiting their practicality in those engineering problems with time constraints. In recent years, a new category of distributed optimization algorithms has emerged, guaranteeing bounded convergence times. Initial efforts included finite-time convergence distributed algorithms \cite{Polyakov:2015Auto,Wang:2020TAC} based on non-smooth formulations, but they still had unbounded bounds associated with initial states of the system. To address this, fixed-time convergence algorithms were developed \cite{Chen:2018Auto,Wang:2020Auto,Song:2021TNSE}, providing convergence time bounds independent of initial states but reliant on system control parameters. However, establishing a direct relationship between parameter selection and desired convergence time still remains challenging \cite{Chen:2023IJSS}. Recently, optimization research has centered on developing prescribed-time algorithms, which enable the deliberate setting of convergence times without relying on additional parameters. While initially applied in fields like system control \cite{Ni:2020TSMCS}, they've only recently been integrated into distributed optimization algorithms. Presently, prescribed-time algorithms for distributed optimization (PTDO) take three primary forms: non-smooth formulation, time-base generator functions \cite{Guo:2020TSMCS}, and time-varying scaling functions \cite{Wang:2018TC}. Of these, the latter two, known for their innovation and efficiency, have gained more attention.

The goal of this paper is to construct high-performance and user-friendly ZGS algorithms for PTDO in multi-stage/single-stage structures based on the aforementioned time-varying scaling functions. The motivations behind this research can be summarized into the following two main aspects,
\begin{itemize}
	\item [1)] current multi-stage prescribed-time ZGS methods involve separate stage executions for tasks like `local minimization' and `global convergence', which may introduce additional complexity and overhead. Hence, exploring multi-stage ZGS algorithms without explicit stage separation is valuable.
	\item [2)] a research gap currently exists regarding single-stage prescribed-time ZGS algorithms. This is typically attributed to the initial value selection for ZGS, frequently requiring an extra stage in algorithm design or analysis to address this concern. Thus, developing a single-stage ZGS algorithm that ensures both `initial condition satisfaction' and `global prescribed-time convergence' in a single time interval  addresses this gap.
\end{itemize}

The contributions of this paper are threefold.
\begin{itemize}
	\item [1)] \textit{Multi/Single-Stage Structure}: This paper conducts a comprehensive study and review of recent ZGS algorithms with different convergence rates, providing a unified framework to incorporate these algorithms within the proposed multi/single-stage structures.
	\item [2)] \textit{Multi-Stage ZGS Algorithm}: An implicit multi-stage ZGS algorithm is proposed in this paper, allowing for prescribed-time convergence without requiring any initial value constraints. The incorporation of the implicit multi-stage structure enhances the algorithm's applicability.
	\item [3)] \textit{Single-Stage ZGS Algorithm}: An novel single-stage ZGS algorithm is proposed in this paper, enabling prescribed-time convergence without the need for initial value constraint. Importantly, this algorithm doesn't demand compliance with another typical constraint found in ZGS algorithms. Unlike many related studies, this algorithm is single-stage in both protocol design and convergence analysis, bypassing the prerequisite of satisfying initial value conditions before conducting convergence analysis.
\end{itemize}

The rest of this paper is organized as follows. Section \ref{Section 2} presents the objective problem and introduces several fundamental concepts. In Section \ref{Section 3}, the multi/single-stage structure and two novel algorithms are introduced, and their prescribed-time convergence properties are analyzed. Section \ref{Section 4} contains the simulation results that illustrate the performance of the proposed algorithms. Lastly, Section \ref{Section 5} provides the concluding remarks and reflections.

\textit{Notation:} Throughout the paper, $\|\cdot\|$  denotes the 2-norm for vectors, and the Frobenius norm for matrices. $\nabla^{2} f$  denotes the Hessian matrix of the twice differentiable function  $f$. $I_{N}$ is the identity matrices of $N\times N$ and $\mathbf{1}_{N}$ is a vector of ones. $\otimes$ represents the Kronecker product. $\mathcal{N}_{i}$ is the set of neighbor nodes of agent $i$ in a multi-agent system. $L^{\top}$ means the transpose of matrix $L$. $\chi^{(i)},i=0,1,2$ represents the $i$th order derivative of $\chi$.
%Notations: ;

\section{Preliminaries}\label{Section 2}

\subsection{Prescribed-time stability}
\textbf{Time-varying scaling function} The following definition of time-varying scaling function is first given, which is the core of the design of a distributed optimizaiton with prescribed-time convergence performance.
\begin{defn}\cite{Wang:2018TC,Chen:2020TC}\label{def1}
	The time-varying scaling functions used here are defined as 
	\begin{equation}\label{mu1}
		\varrho_{1}(t)\triangleq \varrho_{1}(t;t_{0},T_{1})=\left\lbrace
		\begin{array}{cl}
			\frac{T_{1}^{h_{1}}}{\left(T_{1}+t_{0}-t\right)^{h_{1}}}, &\hspace{-6pt}\quad t \in\left[t_{0}, t_{0}+T_{1}\right),\\
			1 , &\hspace{-6pt}\quad otherwise,
		\end{array}
		\right.
	\end{equation}
	\begin{equation}\label{mu2}
		\varrho_{2}(t)\triangleq \varrho_{1}(t;t_{1},T_{2})=\left\lbrace
		\begin{array}{cl}
			\frac{T_{2}^{h_{2}}}{\left(T_{2}+t_{1}-t\right)^{h_{2}}}, &\hspace{-6pt}\quad t \in\left[t_{1}, t_{1}+T_{2}\right),\\
			1 , &\hspace{-6pt}\quad otherwise,
		\end{array}
		\right.
	\end{equation}
	where $h_{1},h_{2}>0$, $T_{1},T_{2}>0$ usually represent the desired prescribed times and the duration of the time-scaling function, $t_{0}\ge 0$ and $t_{1}=t_{0}+T_{1}$ are the initial instant of each duration.
\end{defn}

\textbf{Prescribed-time stable} The following definition and lemma regarding prescribed-time stability are essential for the algorithm analysis in this paper.
\begin{defn}\cite{Gong:2021TNSE}\label{def2}
	Considering a Lyapunov function $V(x,t)$, if the following equations hold,
	\begin{equation}\label{PTC}
		\begin{array}{rl}
			1)&\hspace{-6pt}  \lim\limits _{t \rightarrow\left(t_{0}+T\right)^{-}} V(x, t)=0 , \\
			2)&\hspace{-6pt}  V(x, t) \equiv 0, \forall t \in\left[t_{0}+T, \infty\right) ,
		\end{array}
	\end{equation}
	where $t_{0}$ is the initial instant, then $V(x,t)$ is globally prescribed-time stable with prescribed time interval $T$. 
\end{defn}
\begin{lem}\cite[Lemma 1]{Wang:2018TC}\label{lem7}
	Considering a Lyapunov function $V(x,t)$, if there exists $\kappa > 0$ such that
	\begin{equation}\nonumber
		\begin{array}{l}
			\dot{V}(x, t)=-\left[\kappa+2 \frac{\dot{\varrho}\left(t ; t_{0}, T\right)}{\varrho\left(t ; t_{0}, T\right)}\right] V(x, t), t \in\left[t_{0}, \infty\right),
		\end{array}
	\end{equation}
	then it can be derived that $V(x, t) = \varrho\left(t ; t_{0}, T\right)^{-2} \mathrm{e}^{\left(-\kappa\left(t-t_{0}\right)\right)}  V\left(x\left(t_{0}\right), t_{0}\right),t\in [t_{0},+\infty)$, which satisfies the conditions \eqref{PTC} in Definition \ref{def2}. Then $V(x,t)$ is globally prescribed-time stable within $T$. 
\end{lem}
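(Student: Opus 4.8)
The plan is to treat the hypothesis as a scalar linear time-varying differential equation for $V$ evaluated along trajectories and integrate it explicitly. Write $h>0$ for the exponent appearing in the definition \eqref{mu1}--\eqref{mu2} of the relevant $\varrho(\cdot\,;t_0,T)$. On $[t_0,t_0+T)$ this function is smooth and strictly positive, and $2\dot\varrho/\varrho=\tfrac{\mathrm d}{\mathrm dt}\ln\varrho^{2}$, so the hypothesis is equivalent to $\tfrac{\mathrm d}{\mathrm dt}\bigl(\mathrm e^{\kappa t}\varrho(t;t_0,T)^{2}V(x,t)\bigr)=0$ (equivalently, one separates variables in $\dot V/V=-\kappa-2\dot\varrho/\varrho$). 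Integrating from $t_0$ to $t$ and using the normalization $\varrho(t_0;t_0,T)=T^{h}/(T+t_0-t_0)^{h}=1$ yields exactly $V(x,t)=\varrho(t;t_0,T)^{-2}\mathrm e^{-\kappa(t-t_0)}V(x(t_0),t_0)$ on $[t_0,t_0+T)$, which is the displayed closed form.

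Next I would verify the two items of \eqref{PTC}. For item 1), on $[t_0,t_0+T)$ we have $\varrho(t;t_0,T)^{-2}=(T+t_0-t)^{2h}/T^{2h}$, whose numerator tends to $0$ while $T^{2h}$ is a fixed positive constant, and $\mathrm e^{-\kappa(t-t_0)}\to\mathrm e^{-\kappa T}$, with $V(x(t_0),t_0)$ a finite constant (finiteness and nonnegativity being guaranteed since $V$ is a Lyapunov function); hence $\lim_{t\to(t_0+T)^{-}}V(x,t)=0$. For item 2), continuity of $V$ forces $V(x,t_0+T)=0$; and for $t\ge t_0+T$ the scaling function satisfies $\varrho\equiv1$, $\dot\varrho\equiv0$, so the hypothesis reduces to $\dot V=-\kappa V$ with zero data at $t_0+T$, whence $V(x,t)\equiv0$ on $[t_0+T,\infty)$ by uniqueness of solutions of this linear equation (or more elementarily because $0\le V$ and $\dot V\le 0$ there). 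Together these establish both conditions of \eqref{PTC}, so by Definition \ref{def2} the function $V$ is globally prescribed-time stable within $T$.

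The only delicate point I anticipate is the junction at $t=t_0+T$: the coefficient $2\dot\varrho/\varrho$ is unbounded as $t\to(t_0+T)^{-}$, since $\dot\varrho(t)=h\,T^{h}(T+t_0-t)^{-h-1}$, so the integration must be performed on the open interval $[t_0,t_0+T)$ and the value at $t_0+T$ recovered by a continuity/limit argument rather than by integrating through the singularity, after which the tail interval $[t_0+T,\infty)$ is handled separately as above. Everything else is a routine integration of a first-order linear ODE, so no further subtlety is expected.
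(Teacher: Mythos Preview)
Your argument is correct. You integrate the scalar linear ODE by recognizing $\kappa+2\dot\varrho/\varrho$ as the logarithmic derivative of $\mathrm e^{\kappa t}\varrho^{2}$, obtain the closed form on $[t_0,t_0+T)$ using $\varrho(t_0;t_0,T)=1$, and then verify both items of \eqref{PTC} by computing the limit $\varrho^{-2}\to 0$ and handling the tail $[t_0+T,\infty)$ via $\dot V=-\kappa V$ with zero initial data. Your remark about the unbounded coefficient at the junction $t=t_0+T$, and the need to pass to the limit rather than integrate through the singularity, is the right way to close the gap.

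As for comparison: the paper does not supply a proof of this lemma at all. It is quoted verbatim from \cite[Lemma~1]{Wang:2018TC} and used as a black box in the proof of Theorem~\ref{thm1}. So there is no ``paper's own proof'' to compare against; your derivation is precisely the standard one that underlies the cited result, and nothing further is needed.
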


\subsection{Graph theory}
A undirected graph $\mathcal{G}=(\mathcal{V},\mathcal{E},\mathcal{A})$ is defined by a node set $\mathcal{V}=\left\lbrace v_1,v_2,...,v_N\right\rbrace $, an undirected edge set $\mathcal{E}\subseteq\mathcal{V}\times\mathcal{V}$, and an adjacency matrix $\mathcal{A}=\left[ a_{ij}\right] _{N\times N}$ with non-negative elements. The Laplacian matrix $L$ is defined as $D-A$ where $D={\rm diag}\{d(v_1),d(v_2),\ldots,d(v_N)\}$ is the degree matrix. The graph $\overline{\mathcal{G}}$ is the complete graph of $\mathcal{G}$ if it has the same number of agents and contains an edge between every pair of nodes.

The following lemmas provide the properties of undirected graphs.
\begin{lem}\cite{Yu:2021ACC}\label{lem0}
	Given a connected and undirected graph $\mathcal{G}$ with $N$ nodes, it follows that
	\begin{itemize}
		\item [1)] $L$ is semi-positive definite and $\mathbf{1}_{N}^{\top}L=0$, $L\mathbf{1}_{N}=0$ hold.
		\item [2)] $L$ has one zero eigenvalue, while the remaining eigenvalues possess nonnegative real parts which can be arranged in ascending order as $\left[0, \lambda_1(L), \ldots, \lambda_{N}(L)\right]$.
		\item [3)] a positive definite matrix $\Upsilon$ exists such that $L \Upsilon=   \Upsilon L=\Pi$  with  $\Pi:=I_{N}-\frac{1}{N} \mathbf{1}_{N} \mathbf{1}_{N}^{\top}$.
	\end{itemize}
\end{lem}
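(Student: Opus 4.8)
The plan is to verify the three items in sequence, using throughout that an undirected $\mathcal{G}$ makes $\mathcal{A}$ symmetric, hence $L=D-\mathcal{A}$ is real symmetric, and that the $i$th diagonal entry of $D$ is $d(v_{i})=\sum_{j}a_{ij}$. For item 1), the $i$th row of $L$ sums to $d(v_{i})-\sum_{j}a_{ij}=0$, which gives $L\mathbf{1}_{N}=0$ and, by $L=L^{\top}$, also $\mathbf{1}_{N}^{\top}L=0$; semi-positive definiteness follows from the identity $x^{\top}Lx=\tfrac{1}{2}\sum_{i,j}a_{ij}(x_{i}-x_{j})^{2}\ge 0$, valid for all $x\in\mathbb{R}^{N}$ and obtained by expanding the quadratic form and collecting terms. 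For item 2), symmetry and $L\succeq 0$ imply that all eigenvalues are real and nonnegative, and item 1) identifies $0$ as one of them with eigenvector $\mathbf{1}_{N}$. To show this eigenvalue is simple, I would take any $x$ with $Lx=0$; then $x^{\top}Lx=0$ forces $a_{ij}(x_{i}-x_{j})^{2}=0$ for every pair, i.e. $x_{i}=x_{j}$ along every edge, and connectedness of $\mathcal{G}$ propagates this equality along a path between any two nodes, so $x\in\mathrm{span}\{\mathbf{1}_{N}\}$. Hence $\ker L$ is one-dimensional, the remaining $N-1$ eigenvalues are strictly positive, and they may be listed in ascending order as stated.

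For item 3), which is the step needing the most care, I would construct $\Upsilon$ explicitly through the pseudoinverse. Let $L^{+}$ denote the Moore--Penrose pseudoinverse of $L$ and set $\Upsilon:=L^{+}+\tfrac{1}{N}\mathbf{1}_{N}\mathbf{1}_{N}^{\top}$. Because $L$ is symmetric, $\mathrm{range}(L)=(\ker L)^{\perp}=\mathbf{1}_{N}^{\perp}$, so both $LL^{+}$ and $L^{+}L$ equal the orthogonal projector onto $\mathbf{1}_{N}^{\perp}$, which is precisely $\Pi=I_{N}-\tfrac{1}{N}\mathbf{1}_{N}\mathbf{1}_{N}^{\top}$; since $L\mathbf{1}_{N}=\mathbf{1}_{N}^{\top}L=0$, the rank-one correction contributes nothing to either product and $L\Upsilon=\Upsilon L=\Pi$ follows. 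Positive definiteness of $\Upsilon$ is then checked eigenvalue by eigenvalue: restricted to $\mathbf{1}_{N}^{\perp}$, $\Upsilon$ coincides with $L^{+}$ and has eigenvalues $\lambda_{k}(L)^{-1}>0$, while $\Upsilon\mathbf{1}_{N}=\mathbf{1}_{N}$ supplies the last eigenvalue $1>0$; hence $\Upsilon\succ 0$.

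The argument is essentially linear algebra, so the main obstacle is less a genuine difficulty than a couple of points to get right. The connectivity hypothesis is used exactly once, in item 2), to pin down $\dim\ker L=1$ (for an arbitrary undirected graph this dimension is the number of connected components), and in item 3) the correction added to $L^{+}$ must lie entirely in $\ker L$, so that it repairs definiteness without perturbing the relation $L\Upsilon=\Pi$. Any positive multiple of $\mathbf{1}_{N}\mathbf{1}_{N}^{\top}$ would serve; the normalization $\tfrac{1}{N}$ is chosen only so that $\Upsilon\mathbf{1}_{N}=\mathbf{1}_{N}$.
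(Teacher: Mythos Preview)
Your argument is correct in all three parts: the row-sum identity and the quadratic form $x^{\top}Lx=\tfrac{1}{2}\sum_{i,j}a_{ij}(x_{i}-x_{j})^{2}$ give item~1); the connectedness-based kernel argument pins down $\dim\ker L=1$ for item~2); and the explicit construction $\Upsilon=L^{+}+\tfrac{1}{N}\mathbf{1}_{N}\mathbf{1}_{N}^{\top}$ for item~3) is verified exactly as you describe, since $LL^{+}=L^{+}L=\Pi$ on a symmetric $L$ with one-dimensional kernel and the rank-one correction lives entirely in $\ker L$.

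As for comparison with the paper: there is nothing to compare against. The paper states Lemma~\ref{lem0} as a quoted result from \cite{Yu:2021ACC} and offers no proof of its own, so your self-contained argument goes strictly beyond what the paper does. Your approach is the standard one and would be the natural choice if a proof were required here.
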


%\begin{lem}\cite{Gong:2021TNSE}\label{lem1}
%	Given a connected and undirected graph $\mathbb{G}$ and a series of $x_i \in \mathbb{R}^{n}$, there is $\sum_{i=1}^{N} \sum_{j=1, j \neq i}^{N}\left\|x_{i}-x_{j}\right\| \leq \frac{(N-1) N}{2} \sum_{i=1}^{N} \sum_{j \in \mathcal{N}_{i}}\left\|x_{i}-x_{j}\right\|$.
%\end{lem}

%\begin{lem}\cite{Ren:2010book}\label{lem4}
%	Given a connected and undirected graph $\mathbb{G}$ and a series of $x_i \in \mathbb{R}^{n}$ and $e_i \in \mathbb{R}^{n}$, it follows that 
%	\begin{equation}\nonumber
%		\sum\limits_{i=1}^{N} \sum\limits_{j=1}^{N} a_{i j} (x_{i}-x_{j})^{\mathrm{T}} e_{i}=\frac{1}{2} \sum\limits_{i=1}^{N} \sum\limits_{j=1}^{N} a_{i j} (x_{i}-x_{j})^{\mathrm{T}} (e_{i}-e_{j}).
%	\end{equation}
%\end{lem}
\begin{lem}\cite{Guo:2023Auto}\label{lem5}
	Given a connected and undirected graph $\mathcal{G}$, let its adjacency matrix and Laplacian matrix be denoted as $A$ and $L$, respectively. Then for $x_1,x_2,\ldots,x_N \in \mathbb{R}^{n}$ and $\boldsymbol{x}=[x_1^{\top},x_2^{\top},\cdots,x_N^{\top}]^{\top} \in \mathbb{R}^{nN}$ the following equations hold $\boldsymbol{x}^{\top}(L\otimes I_n)\boldsymbol{x}=\frac{1}{2}\sum_{i=1}^{N}\sum_{j=1}^{N}a_{ij}(x_i-x_j)^{\top}(x_i-x_j)$.
\end{lem}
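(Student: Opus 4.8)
The statement to be proved is Lemma \ref{lem5}, which expresses the quadratic form $\boldsymbol{x}^{\top}(L \otimes I_n)\boldsymbol{x}$ in terms of a weighted sum of squared differences over edges of the graph. This is a standard identity about graph Laplacians. Let me sketch a proof plan.

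\textbf{Proof proposal.}

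The plan is to expand the quadratic form $\boldsymbol{x}^{\top}(L \otimes I_n)\boldsymbol{x}$ directly using the block structure of $L \otimes I_n$, and then reorganize the resulting double sum to recognize it as the claimed edge-based expression. First I would write $L = D - A$ from the graph-theoretic definitions, so that $L \otimes I_n = (D \otimes I_n) - (A \otimes I_n)$, and observe that the $(i,j)$ block of $L \otimes I_n$ is $L_{ij} I_n$, where $L_{ii} = d(v_i) = \sum_{j} a_{ij}$ (using that the graph is undirected, so $a_{ij} = a_{ji}$ and $a_{ii} = 0$) and $L_{ij} = -a_{ij}$ for $i \neq j$. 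Then the quadratic form becomes
\begin{equation}\nonumber
\boldsymbol{x}^{\top}(L \otimes I_n)\boldsymbol{x} = \sum_{i=1}^{N}\sum_{j=1}^{N} x_i^{\top} L_{ij} x_j = \sum_{i=1}^{N} d(v_i)\, x_i^{\top}x_i - \sum_{i=1}^{N}\sum_{j=1}^{N} a_{ij}\, x_i^{\top}x_j.
\end{equation}

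Next I would rewrite the first term using $d(v_i) = \sum_{j=1}^{N} a_{ij}$, giving $\sum_{i}\sum_{j} a_{ij}\, x_i^{\top}x_i$, and similarly exploit the symmetry $a_{ij} = a_{ji}$ to symmetrize: $\sum_{i}\sum_{j} a_{ij}\, x_i^{\top}x_i = \tfrac12\sum_{i}\sum_{j} a_{ij}\,(x_i^{\top}x_i + x_j^{\top}x_j)$ and $\sum_{i}\sum_{j} a_{ij}\, x_i^{\top}x_j = \tfrac12 \sum_{i}\sum_{j} a_{ij}\,(x_i^{\top}x_j + x_j^{\top}x_i)$. Combining these two symmetrized forms yields $\tfrac12\sum_{i}\sum_{j} a_{ij}\,(x_i^{\top}x_i - x_i^{\top}x_j - x_j^{\top}x_i + x_j^{\top}x_j) = \tfrac12\sum_{i}\sum_{j} a_{ij}\,(x_i - x_j)^{\top}(x_i - x_j)$, which is exactly the right-hand side of the claimed identity.

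There is no serious obstacle here; the only points requiring care are bookkeeping ones — correctly identifying the blocks of $L \otimes I_n$ (in particular that the Kronecker product with $I_n$ simply replicates each scalar entry $L_{ij}$ as a block $L_{ij} I_n$, so that $x_i^{\top}(L_{ij}I_n)x_j = L_{ij}\, x_i^{\top}x_j$), and using the undirected-graph assumptions ($a_{ij} = a_{ji}$, $a_{ii} = 0$, $d(v_i) = \sum_j a_{ij}$) to justify the symmetrization step. Since the graph is assumed connected and undirected, all of these hold, and the identity follows. I would also note that the identity in fact holds for any undirected weighted graph, connectedness being unnecessary for this particular equality, though it is part of the hypothesis as stated.
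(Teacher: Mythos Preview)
Your proof is correct and is the standard argument for this Laplacian quadratic-form identity. Note, however, that the paper does not actually prove Lemma~\ref{lem5}: it is stated with a citation to \cite{Guo:2023Auto} and used without proof, so there is no in-paper argument to compare against.
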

\begin{lem}\cite{Wang:2020Auto}\label{lem6}
	Given a connected and undirected graph $\mathcal{G}$ and its corresponding complete graph $\overline{\mathcal{G}}$, with their Laplacian matrices denoted as $L$ and $L_{\overline{\mathcal{G}}}$, the following equation holds $\lambda_{2}(L)L_{\overline{\mathcal{G}}}\leq NL$.
\end{lem}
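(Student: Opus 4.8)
The plan is to reduce this matrix inequality to a statement purely about the spectrum of $L$. First I would identify $L_{\overline{\mathcal{G}}}$ explicitly: the complete graph on $N$ nodes has degree matrix $(N-1)I_N$ and adjacency matrix $\mathbf{1}_N\mathbf{1}_N^{\top}-I_N$, so $L_{\overline{\mathcal{G}}}=N I_N-\mathbf{1}_N\mathbf{1}_N^{\top}=N\Pi$ with $\Pi=I_N-\tfrac{1}{N}\mathbf{1}_N\mathbf{1}_N^{\top}$ as in Lemma \ref{lem0}. Since $N>0$, the claim $\lambda_2(L)L_{\overline{\mathcal{G}}}\le NL$ is then equivalent to the positive-semidefinite ordering $L-\lambda_2(L)\Pi\succeq 0$.

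Next I would exploit that $L$ is symmetric positive semidefinite with $L\mathbf{1}_N=0$ (Lemma \ref{lem0}), so there is an orthonormal eigenbasis $\{\tfrac{1}{\sqrt N}\mathbf{1}_N,u_2,\dots,u_N\}$ of $\mathbb{R}^N$ with $Lu_k=\lambda_k(L)u_k$ and $\lambda_2(L)\le\lambda_3(L)\le\cdots\le\lambda_N(L)$ the nonzero eigenvalues. Because $\Pi$ is the orthogonal projector onto $\mathbf{1}_N^{\perp}$, this same basis diagonalizes $\Pi$ ($\Pi\mathbf{1}_N=0$, $\Pi u_k=u_k$ for $k\ge 2$). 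Hence $L-\lambda_2(L)\Pi$ is diagonal in this basis with eigenvalue $0$ along $\mathbf{1}_N$ and eigenvalues $\lambda_k(L)-\lambda_2(L)\ge 0$ along $u_k$, so it is positive semidefinite, which is exactly the desired inequality. Equivalently, and perhaps cleaner to write up: for arbitrary $x\in\mathbb{R}^N$ decompose $x=\alpha\mathbf{1}_N+y$ with $y\perp\mathbf{1}_N$; then $x^{\top}\Pi x=\|y\|^2$ while $x^{\top}Lx=y^{\top}Ly\ge\lambda_2(L)\|y\|^2$ by the Courant--Fischer characterization of $\lambda_2(L)$ on the invariant subspace $\mathbf{1}_N^{\perp}$, so $x^{\top}(NL)x\ge N\lambda_2(L)\,x^{\top}\Pi x=\lambda_2(L)\,x^{\top}L_{\overline{\mathcal{G}}}x$ for all $x$.

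There is essentially no serious obstacle here: the only points needing care are (i) correctly computing $L_{\overline{\mathcal{G}}}=N\Pi$, and (ii) staying consistent with the eigenvalue-ordering convention, so that $\lambda_2(L)$ is indeed the \emph{smallest} nonzero eigenvalue of $L$ — the quantity that must lower-bound the Rayleigh quotient $y^{\top}Ly/\|y\|^2$ over $y\in\mathbf{1}_N^{\perp}$. Everything else is the routine simultaneous-diagonalization / Rayleigh-quotient argument, and connectedness of $\mathcal{G}$ enters only through $\lambda_2(L)>0$ (which is not actually needed for the inequality itself, only for it to be nonvacuous).
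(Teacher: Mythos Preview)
Your argument is correct. You correctly compute $L_{\overline{\mathcal{G}}}=NI_N-\mathbf{1}_N\mathbf{1}_N^{\top}=N\Pi$, reduce the claim to $L-\lambda_2(L)\Pi\succeq 0$, and verify this either by simultaneous diagonalization or by the Rayleigh-quotient bound on $\mathbf{1}_N^{\perp}$; both routes are sound, and your remark that connectedness only ensures $\lambda_2(L)>0$ (and is otherwise unnecessary for the inequality) is also accurate.

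Note, however, that the paper does not give its own proof of this lemma: it is stated with a citation to \cite{Wang:2020Auto} and used as a black box in the proof of Theorem~\ref{thm1}. So there is no ``paper's proof'' to compare against; your write-up simply supplies a self-contained justification that the paper omits.
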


\subsection{Convex analysis}
\begin{lem}\cite{Chen:2016Auto}\label{lem3}
	If function $f(x)$ is twice continuously differentiable and $\gamma-$strongly convex, then the following equivalent conditions hold, i.e.,
	\begin{equation}\nonumber
		\begin{array}{l}
			f\left(x_{1}\right)-f\left(x_{2}\right)-\nabla^{\top} f\left(x_{2}\right)\left(x_{1}-x_{2}\right) \geqslant \frac{\gamma}{2}\left\|x_{1}-x_{2}\right\|^{2}, \\
			\left[\nabla f\left(x_{1}\right)-\nabla f\left(x_{2}\right)\right]^{\top}\left(x_{1}-x_{2}\right) \geqslant \gamma\left\|x_{1}-x_{2}\right\|^{2}, \\
			\nabla^{2} f\left(x_{1}\right) \geqslant \gamma I_{n}.
		\end{array}
	\end{equation}
	Furthermore, if there exists $\Gamma$ such that $\nabla^{2} f\left(x_{1}\right) \leq \Gamma I_{n}$, then the following equations hold.
	\begin{equation}\nonumber
		\begin{array}{l}
			f\left(x_{1}\right)-f\left(x_{2}\right)-\nabla^{\top} f\left(x_{2}\right)\left(x_{1}-x_{2}\right) \leqslant \frac{\Gamma}{2}\left\|x_{1}-x_{2}\right\|^{2},\\
			\left[\nabla f\left(x_{1}\right)-\nabla f\left(x_{2}\right)\right]^{\top}\left(x_{1}-x_{2}\right) \leqslant \Gamma\left\|x_{1}-x_{2}\right\|^{2}.
		\end{array}
	\end{equation}
\end{lem}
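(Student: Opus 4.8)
The statement asserts that for a twice continuously differentiable function the three listed inequalities are mutually equivalent (and, since $\gamma$-strong convexity is by definition one of them — typically that $f(\cdot)-\tfrac{\gamma}{2}\|\cdot\|^2$ is convex, which after rearrangement is the first inequality — all three then hold). So the plan is to prove the equivalence by establishing that the Hessian condition $\nabla^2 f\succeq\gamma I_n$ implies each of the first two inequalities, and conversely that either of the first two implies the Hessian condition; this way all three are equivalent through the Hessian condition as a hub. The single technical device throughout would be Taylor's theorem with integral remainder: for any $x_1,x_2$,
\begin{equation}\nonumber
\nabla f(x_1)-\nabla f(x_2)=\int_0^1 \nabla^2 f\big(x_2+s(x_1-x_2)\big)\,(x_1-x_2)\,\mathrm{d}s,
\end{equation}
\begin{equation}\nonumber
f(x_1)-f(x_2)-\nabla^\top f(x_2)(x_1-x_2)=\int_0^1 (1-s)\,(x_1-x_2)^\top\nabla^2 f\big(x_2+s(x_1-x_2)\big)(x_1-x_2)\,\mathrm{d}s.
\end{equation}

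First I would assume $\nabla^2 f\succeq\gamma I_n$ on the whole domain. Left-multiplying the first identity by $(x_1-x_2)^\top$ and bounding the integrand pointwise by $\gamma\|x_1-x_2\|^2$ gives $[\nabla f(x_1)-\nabla f(x_2)]^\top(x_1-x_2)\ge\gamma\|x_1-x_2\|^2$, the second inequality; bounding the integrand of the second identity in the same way, together with $\int_0^1(1-s)\,\mathrm{d}s=\tfrac12$, gives the $\tfrac{\gamma}{2}\|x_1-x_2\|^2$ lower bound, the first inequality. For the ``furthermore'' part I would run these same two computations with the upper bound $\nabla^2 f\preceq\Gamma I_n$ in place of the lower bound, which merely reverses each inequality and yields the two $\Gamma$-upper estimates.

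For the converse direction I would fix an arbitrary point $x_0$ and an arbitrary unit vector $v$, and test the first (resp. second) inequality at $x_1=x_0+\varepsilon v$, $x_2=x_0$. Dividing by $\varepsilon^2$ and letting $\varepsilon\to0$, the integral representations above show that the left-hand side converges to $\tfrac12\,v^\top\nabla^2 f(x_0)v$ (resp. $v^\top\nabla^2 f(x_0)v$) by continuity of $\nabla^2 f$; hence $v^\top\nabla^2 f(x_0)v\ge\gamma$ for every unit $v$, that is $\nabla^2 f(x_0)\succeq\gamma I_n$, and since $x_0$ was arbitrary this is the Hessian condition.

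The argument is not deep; the points that would need care are (i) that the Hessian bounds are to be read as holding on the whole domain — or at least on every segment $\{x_2+s(x_1-x_2):s\in[0,1]\}$, so that convexity of the domain is implicitly used to keep these segments inside it — which is what licenses the pointwise bounding of the integrands, and (ii) the exchange of limit and integral in the converse step, which follows from continuity of $\nabla^2 f$ and uniform convergence on the compact parameter interval $s\in[0,1]$. I expect point (i) — phrasing the hypotheses globally or segment-wise rather than pointwise — to be the only genuine subtlety.
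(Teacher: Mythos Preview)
Your argument is correct and is the standard textbook route for these equivalences. Note, however, that the paper does not actually prove Lemma~\ref{lem3}: it is quoted verbatim from \cite{Chen:2016Auto} and used as a black box in the proofs of Theorems~\ref{thm1} and~\ref{thm2}. So there is no ``paper's own proof'' to compare against; your write-up would in fact supply what the paper omits.
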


\subsection{Problem formulation}
Consider a multi-agent network composed of $N$ agents, each possessing computation and communication capabilities. The communication topology of the network is abstracted as $\mathcal{G}$. Each agent independently possesses an optimization variable for interaction with its neighbors and a private local objective function. The objective is to minimize the sum of these local objective functions, addressing the following optimization problem.
\begin{equation}\label{problem1}
	\begin{array}{l}
		\min\limits_{x \in \mathbb{R}^{n}} \tilde{f}(x)=\sum\limits_{i=1}^{N} f_{i}\left(x\right), 
	\end{array}
\end{equation}
which is equivalent to the following distributed form,
\begin{equation}\label{problem2}
	\begin{array}{ll}
		\min\limits_{\boldsymbol{x} \in \mathbb{R}^{n N}} & f(\boldsymbol{x})=\sum\limits_{i=1}^{N} f_{i}\left(x_{i}\right), \\
		\text { s.t } & \left(L \otimes I_{n}\right) \boldsymbol{x}=0,
	\end{array}
\end{equation}
where $f_i: \mathbb{R}^{n} \to \mathbb{R}$ is the local objective function of agent $i$, $x_i \in \mathbb{R}^{n}$ is the local optimization variable of agent $i$ and $\boldsymbol{x}=[x_1^{\top},x_2^{\top},\cdots,x_N^{\top}]^{\top} \in \mathbb{R}^{nN}$. Denote $\nabla f(\boldsymbol{x})=[\nabla f_{1}(x_1)^{\top},\ldots,\nabla f_{N}(x_N)^{\top}]^{\top}$, $\nabla^{2} f(\boldsymbol{x})=\left[\begin{smallmatrix}
	\nabla^{2} f_{1}\left(x_{1}\right) &   &  \\
	&\ddots &   \\
	&   & \nabla^{2} f_{N}\left(x_{N}\right)
\end{smallmatrix}\right]$ for further analysis.

\begin{assum}\label{Assumption1}
	Graph $\mathcal{G}$ is connected and undirected.
\end{assum}
\begin{assum}\label{Assumption2}
	For each agent $i$, its local objective function $f_i(x_i)$ is twice continuously differentiable, $\gamma_{i}-$strongly convex, $\psi_{i}-$smooth with $\gamma_{i},\psi_{i}>0$, and satisfies $\gamma_{i} I_{n} \leqslant \nabla^{2} f_{i}\left(x_{i}(t)\right) \leqslant \Gamma_{i} I_{n}$ with a bounded $\Gamma_{i}>0$. %Denote $\Gamma=\min_{i \in \mathcal{V}}\left\lbrace \gamma_{i} \right\rbrace ,\Psi=\max_{i \in \mathcal{V}}\left\lbrace \psi_{i} \right\rbrace $.
\end{assum}

%\begin{lem}\cite{Zuo:2014IJC}\label{lem2}
%	Given $x_1,x_2,\ldots,x_N \ge 0$ and $m >0$, if $m\in (0,1]$, there is $(\sum\nolimits_{i=1}^{N}x_i)^m \leq \sum\nolimits_{i=1}^{N}x_i^m$. If $m\in [1,+\infty)$, there is $(\sum\nolimits_{i=1}^{N}x_i)^m \leq N^{m-1}\sum\nolimits_{i=1}^{N}x_i^m$.
%\end{lem}

\section{Main Results}\label{Section 3}

\subsection{Multi/Single-stage structure}
Currently, most prescribed-time distributed optimization algorithms based on time-varying scale functions require multi-stage structures to achieve global convergence. In this context, multi-stage implies that each subsequent stage relies solely on the results of the previous one, with each stage having independent update processes and utilizing a single time-varying scale function. The multi-stage and single-stage structures for prescribed-time distributed optimization algorithms, as visualized in Fig.~\ref{fig_model1} and Fig.~\ref{fig_model2}, can be modeled as follows.

\textbf{Multi-stage\,(MS):}
\begin{equation}\label{MSmodel}
	%	\left\lbrace 
	\begin{array}{lrl}
		&\text{Stage\,$1$:}\,\chi_{1} = &\hspace{-6pt} \Xi_{1}\big(\boldsymbol{x}, f(\boldsymbol{x}),\nabla  f(\boldsymbol{x}),\nabla^{2}  f(\boldsymbol{x}),\\
		&& \varrho_{1}(t;t_{0},T_{1})\big), \\
		&\text{Stage\,$2$:}\,\chi_{2} = &\hspace{-6pt} \Xi_{2}\big(\chi_{1}^{\ast},\boldsymbol{x}, f(\boldsymbol{x}),\nabla  f(\boldsymbol{x}),\nabla^{2}  f(\boldsymbol{x}),\\
		&& \varrho_{2}(t;t_{1},T_{2})\big), \\
		&& \cdots \\
		&\text{Stage\,$k$:}\,\chi_{k} = &\hspace{-6pt} \Xi_{k}\big(\chi_{1}^{\ast},\ldots,\chi_{k-1}^{\ast},\boldsymbol{x}, f(\boldsymbol{x}),\nabla  f(\boldsymbol{x}),\\
		&& \nabla^{2}  f(\boldsymbol{x}),\varrho_{k}(t;t_{k-1},T_{k})\big).
	\end{array}
	%	\right.
\end{equation}

%\dot{\chi}_{1}
For stage $k'\,(1< k'\leq k)$ in this model, $\chi_{k'}=[\chi_{k'1}^{\top},\chi_{k'2}^{\top},\ldots,\chi_{k'm}^{\top}]^{\top}$ represents the $m$ different variables that need to be updated where the $i$th variable $\chi_{k'i}^{\top}=[(\chi_{k'i}^{(0)})^{\top},(\chi_{k'i}^{(1)})^{\top},(\chi_{k'i}^{(2)})^{\top}]^{\top}$ contains information of different orders of derivatives, and $\chi_{1}^{\ast},\ldots,\chi_{k'-1}^{\ast}$ are the optimal solutions obtained from the updates in the previous stages. $\varrho_{k'}(t;t_{k'-1},T_{k'})$ is a time-varying scaling function defined on the interval from $t_{k'-1}$ to $t_{k'-1}+T_{k'}$ and $\Xi_{k'}$ represents the dynamic input for updates.

\begin{rem}
	The aforementioned multi-stage model, due to its explicit stage divisions, can also be referred to as \textbf{explicit multi-stage (EMS)} model. The multi-stage model based distributed algorithms commonly involve stages such as `consensus stage', `global/local information estimation stage', and `global/local optimization stage'. Typically, the results from the previous stage are used to facilitate the subsequent stages. For example, the two-stage ZGS algorithm in \cite{Chen:2022IS} estimates the local minimizers in the first stage to satisfy the ZGS initial conditions before applying the ZGS algorithm in the second stage.
\end{rem}

\textbf{Single-stage\,(SS):}
\begin{equation}\label{SSmodel}
	\begin{array}{l}
		\dot{\chi} = \Xi\big(\boldsymbol{x}, f(\boldsymbol{x}),\nabla  f(\boldsymbol{x}),\nabla^{2}  f(\boldsymbol{x}),\varrho_{1}(t;t_{0},T_{1})\big).
	\end{array}
\end{equation}
Similarly, $\chi=[\chi_{1}^{\top},\chi_{2}^{\top},\ldots,\chi_{m}^{\top}]^{\top}$ denotes the $m$ different variables that need to be updated where the $i$th variable $\chi_{i}^{\top}=[(\chi_{i}^{(0)})^{\top},(\chi_{i}^{(1)})^{\top},(\chi_{i}^{(2)})^{\top}]^{\top}$ contains information of different orders of derivatives. At this point, there are no prior-stage results available for reference and use, as in \eqref{MSmodel}. $\varrho_{1}(t;t_{0},T_{1})$ is the only one time-varying scaling function used in the algorithm and $\Xi$ represents the dynamic input for updates.

\begin{figure}[htbp]
	\centerline{\includegraphics[width=0.45\textwidth]{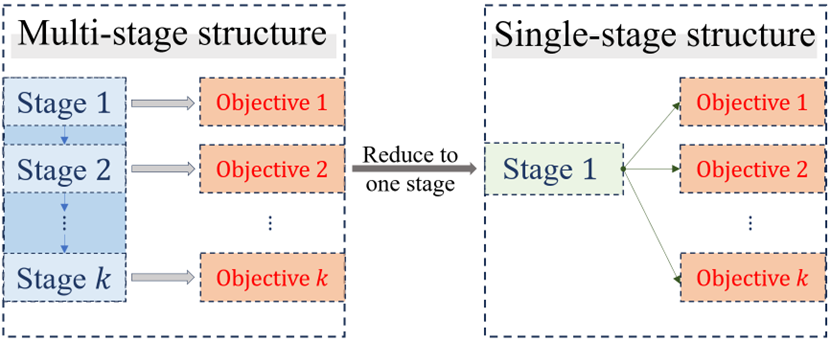}}
	\caption{The multi/single-stage structure.}
	\label{fig_model1}
\end{figure}

Comparing the two models \eqref{MSmodel} and \eqref{SSmodel} mentioned above, it can be observed that single-stage model algorithms are not only structurally more concise but also more convenient for practical use. However, multi-stage model algorithms have the advantage of directly integrating various optimization algorithms according to the objectives of each stage. In contrast, single-stage model algorithms face challenges in both design and analysis because they need to simultaneously achieve all the objectives of the multi-stage model within a single predefined time interval. As a result, most current prescribed-time distributed optimization algorithms based on time-varying scale functions are multi-stage.

\begin{figure}[htbp]
	\centerline{\includegraphics[width=0.45\textwidth]{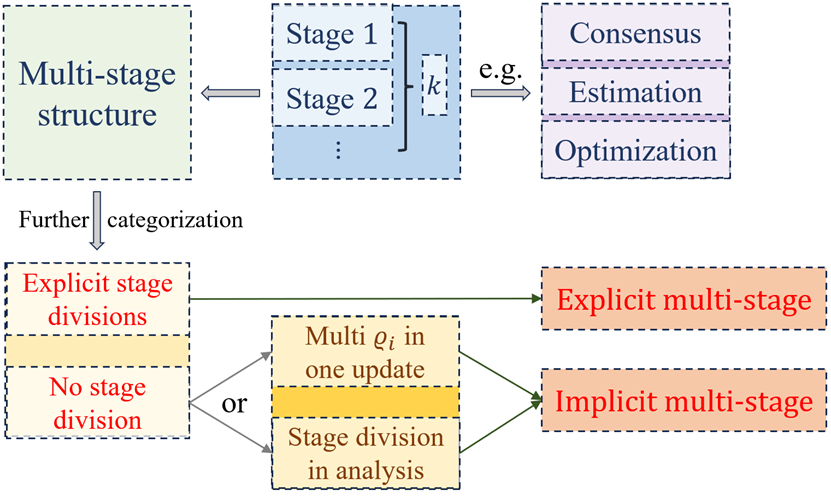}}
	\caption{The explicit/implicit multi-stage structure.}
	\label{fig_model2}
\end{figure}

\begin{rem}
	It is worth noting that even if some algorithms may appear to be structurally similar to single-stage algorithms, they should be considered \textbf{implicit multi-stage (IMS)} algorithms under the following circumstances, i.e.,
	\begin{itemize}
		\item [1)] the simultaneous use of multiple time-varying scaling functions in one update.
		\item [2)] the need for artificial temporal segmentation in the convergence analysis.
	\end{itemize}
	The above considerations also apply to finite-time convergence, fixed-time convergence, and other prescribed-time convergence frameworks. For example, the ZGS algorithm in \cite{Guo:2023Auto}, while not explicitly structured as multi-stage, still defines two time intervals in its fixed-time convergence analysis: one for satisfying the initial conditions of ZGS, and the other for ensuring global convergence. Furthermore, the latter stage solely relies on the results of the former stage, with non-intersecting updates, essentially making it a two-stage algorithm.
\end{rem}

\subsection{ZGS algorithm: research status}
The above models, specific to the ZGS algorithm, usually takes the following form.
\begin{equation}\label{MSmodel-ZGS}
	\begin{array}{rl}
		\dot{x}_{i}(t)=&\hspace{-6pt} -\left(\nabla^{2} f_{i}\left(x_{i}(t)\right)\right)^{-1}\Big[ \zeta_{1}\big(\varrho_{\iota_{1}}(t),y_{i}(t)  \big) \\
		&\hspace{-6pt}+ \zeta_{2}\big(\varrho_{\iota_{2}}(t),\sum\limits_{j \in \mathcal{N}_{i}}a_{i j}\left(x_{i}(t)-x_{j}(t)\right) \big) \Big],
	\end{array}
\end{equation}
where $\zeta_{1}$ and $\zeta_{2}$ denote protocols related to convergence control and consensus control, respectively. $\varrho_{\iota_{1}}(t)$ and $\varrho_{\iota_{2}}(t)$ are two time-varying scaling functions that can be either identical, signifying their equality in a single-stage algorithm, or distinct, signifying their inequality in a multi-stage algorithm (in this context, `equal' or `not equal' indicates whether they are defined over the same time interval).

Recall from \cite{Lu:2012TAC} that the traditional continuous-time ZGS has the following form,
\begin{equation}\label{ZGS}
	\begin{array}{rl}
		\dot{x}_{i}(t) = &\hspace{-6pt}\varphi_{i}\left(x_{i}(t), \mathbf{x}_{\mathcal{N}_{i}}(t) ; f_{i}, \mathbf{f}_{\mathcal{N}_{i}}\right), 
		\forall t \geq 0, \forall i \in \mathcal{V} \\
		x_{i}(t_{0}) = &\hspace{-6pt}\chi_{i}\left(f_{i}, \mathbf{f}_{\mathcal{N}_{i}}\right), \quad \forall i \in \mathcal{V}
	\end{array}
\end{equation}
where $\mathbf{x}_{\mathcal{N}_{i}}(t)$ represents the optimization variable information of node $i$'s neighbors, while $\mathbf{f}_{\mathcal{N}_{i}}$ represents the function value information of node $i$'s neighbors. $\varphi_{i}$ is the local dynamic input to be designed which usually required the following two stringent conditions,
\begin{subequations}\label{ZGScon}
	\begin{alignat}{1}
		&\sum\limits_{i \in \mathcal{V}} \nabla f_{i}\left(x_{i}(t_{0})\right)=0,\label{ZGScon1} \\
		&\sum_{i \in \mathcal{V}} \nabla^{2} f_{i}\left(x_{i}\right) \varphi_{i}\left(x_{i}, \mathbf{x}_{\mathcal{N}_{i}} ; f_{i}, \mathbf{f}_{\mathcal{N}_{i}}\right)=0.\label{ZGScon2} 
	\end{alignat}
\end{subequations}

The presence of these two conditions introduces certain inconveniences in both the utilization and construction of ZGS algorithms. While feasible solutions for \eqref{ZGScon1}, such as introducing an additional stage for local objective function minimization, are already available, there is still a lack of research on algorithms addressing \eqref{ZGScon2}.

%For the above model and problem analysis, Table~\ref{tb1} summarizes the recent algorithmic developments related to the problems addressed, which also include the two algorithms proposed later in this paper.

\subsection{MS-PTZGS algorithm: multi-stage ZGS with prescribed-time performance}

To solve problem \eqref{problem1} without requiring \eqref{ZGScon1}, the following multi-stage PTDO algorithm using sliding mode control method is proposed,
%\begin{equation}\label{al1}
%	\left\lbrace
%	\begin{array}{rl}
	%		\dot{x}_{i}(t)=&\hspace{-6pt}\left(\nabla^{2} f_{i}\left(x_{i}(t)\right)\right)^{-1}\left(\kappa_{1}  \frac{\dot{\varrho}(t)}{\varrho(t)}\right)\big[-\kappa_{2} s_{i}(t)\\
	%		&\hspace{-6pt}-c \sum_{j=1}^{N} a_{i j}\left(x_{i}(t)-x_{j}(t)\right)\big], \\
	%		\dot{y}_{i}(t)=&\hspace{-6pt}\left(\kappa_{1} \frac{\dot{\varrho}(t)}{\varrho(t)}\right) \sum_{j=1}^{N} a_{i j}\left(x_{i}(t)-x_{j}(t)\right), \\
	%		s_{i}(t)=&\hspace{-6pt}\nabla f_{i}\left(x_{i}(t)\right)+c \phi_{i}(t),
	%	\end{array}
%	\right.
%\end{equation}

\begin{subequations}\label{al1}
	\begin{alignat}{1}
		\dot{x}_{i}(t)=&\left(\nabla^{2} f_{i}\left(x_{i}(t)\right)\right)^{-1}\big[-(\kappa_{1}+  \frac{\dot{\varrho}_{1}(t)}{\varrho_{1}(t)})s_{i}(t)\notag\\
		&-c(\kappa_{2}\frac{\dot{\varrho}_{2}(t)}{\varrho_{2}(t)}) \sum\nolimits_{j=1}^{N} a_{i j}\left(x_{i}(t)-x_{j}(t)\right)\big],\label{al1-1} \\
		\dot{\phi}_{i}(t)=&\left(\kappa_{2} \frac{\dot{\varrho}_{2}(t)}{\varrho_{2}(t)}\right) \sum\nolimits_{j=1}^{N} a_{i j}\left(x_{i}(t)-x_{j}(t)\right), \label{al1-2}\\
		s_{i}(t)=&\nabla f_{i}\left(x_{i}(t)\right)+c \phi_{i}(t),\label{al1-3}
	\end{alignat}
\end{subequations}
where $c,\kappa_{1},\kappa_{2}>0$ are tuning parameters, $\varrho_{1}(t),\varrho_{2}(t)$ are the time-scaling functions defined in \eqref{mu1} and \eqref{mu2}, $s_{i}(t)$ is the distributed integral sliding manifold for agent $i$ to avoid additional local minimization to satisfy the requied initial condition $\sum_{i=1}^{N}\nabla f_{i}(x_{i}(t_{0}))=0$ for ZGS.

%\subsection{Convergence analysis: prescribed-time convergence guaranteed}
\begin{thm}\label{thm1}
	Under Assumptions \ref{Assumption1} and \ref{Assumption2}, for any initial value $x_i(t_{0})$, the proposed algorithm \eqref{al1} can solve the optimization problem \eqref{problem1} within a prescribed time interval $T_{1}+T_{2}$, i.e., $\lim\nolimits _{t \rightarrow\left(t_{0}+T_{1}+T_{2}\right)^{-}}x_{i}=x^{\ast}$ where $x^{\ast} \in \mathbb{R}^{n}$ is the global minimizer.
\end{thm}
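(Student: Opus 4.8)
The plan is to analyze the algorithm \eqref{al1} in two conceptual phases that correspond to the two time-scaling functions $\varrho_1$ and $\varrho_2$, even though the updates are run continuously. First I would establish the key structural identity that motivates the sliding-manifold design: differentiating $s_i(t)=\nabla f_i(x_i(t))+c\phi_i(t)$ along \eqref{al1} gives $\dot s_i(t)=\nabla^2 f_i(x_i(t))\dot x_i(t)+c\dot\phi_i(t)$, and substituting \eqref{al1-1} and \eqref{al1-2} the consensus terms cancel, leaving $\dot s_i(t)=-(\kappa_1+\dot\varrho_1(t)/\varrho_1(t))s_i(t)$. Summing over $i$ and using that $\sum_{i}\dot\phi_i(t)=(\kappa_2\dot\varrho_2(t)/\varrho_2(t))\mathbf 1_N^\top(L\otimes I_n)\boldsymbol x=0$ by Lemma \ref{lem0}, one sees $\frac{\mathrm d}{\mathrm dt}\sum_i s_i(t)=-(\kappa_1+\dot\varrho_1/\varrho_1)\sum_i s_i(t)$, and since $s_i(t_0)=\nabla f_i(x_i(t_0))$ (take $\phi_i(t_0)=0$), we get $\sum_i s_i(t)=\varrho_1(t)^{-2}\mathrm e^{-\kappa_1(t-t_0)}\sum_i\nabla f_i(x_i(t_0))$ via Lemma \ref{lem7} applied componentwise; thus $\sum_i s_i(t)\to 0$ as $t\to(t_0+T_1)^-$ and stays zero thereafter. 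This is the mechanism replacing condition \eqref{ZGScon1}.

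Next I would define a Lyapunov function adapted to the ZGS structure. Let $\chi_i$ solve $\nabla f_i(\chi_i)=s_i(t)/N$ (well-defined since each $f_i$ is $\gamma_i$-strongly convex, hence $\nabla f_i$ is a global diffeomorphism), and consider $V(t)=\sum_{i=1}^N\big(f_i(x^\ast)-f_i(x_i(t))-\nabla^\top f_i(x_i(t))(x^\ast-x_i(t))\big)$ or the more standard ZGS choice $V(t)=\sum_i\big(f_i(x_i(t))-f_i(x^\ast)-\nabla^\top f_i(x^\ast)(x_i(t)-x^\ast)\big)$; by Lemma \ref{lem3} this is sandwiched between $\frac{\gamma_i}{2}\|x_i-x^\ast\|^2$ and $\frac{\Gamma_i}{2}\|x_i-x^\ast\|^2$ terms, so $V(t)\to 0$ is equivalent to $x_i(t)\to x^\ast$ for all $i$. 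Differentiating $V$ along \eqref{al1} yields $\dot V(t)=-\sum_i(\nabla f_i(x_i)-\nabla f_i(x^\ast))^\top\dot x_i=\sum_i(\nabla f_i(x_i)-\nabla f_i(x^\ast))^\top(\nabla^2 f_i(x_i))^{-1}\big[(\kappa_1+\dot\varrho_1/\varrho_1)s_i+c\kappa_2(\dot\varrho_2/\varrho_2)\sum_j a_{ij}(x_i-x_j)\big]$. For $t\ge t_0+T_1$ we have $s_i$ contributing only through the fact that $\sum_i s_i=0$; here one uses the ZGS bookkeeping: $\sum_i\nabla^\top f_i(x^\ast)\dot x_i$ and the consensus quadratic form via Lemma \ref{lem5} combine so that $\dot V\le -(\text{const}+2\dot\varrho_2/\varrho_2)V$ on $[t_0+T_1,\infty)$, and Lemma \ref{lem7} then delivers convergence within $T_2$.

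The more delicate part is the interval $[t_0,t_0+T_1)$, where $\sum_i s_i(t)$ is not yet zero and both scaling functions are active simultaneously — this is exactly why the referenced works split into explicit stages. I would handle it by showing $V(t)$ remains bounded on $[t_0,t_0+T_1)$: the term driven by $s_i(t)=N\nabla f_i(\chi_i(t))$ is controlled because $\sum_i s_i$ decays like $\varrho_1^{-2}\mathrm e^{-\kappa_1(t-t_0)}$ while $\varrho_1$ blows up, and one needs a careful estimate showing the $\dot\varrho_1/\varrho_1$ factor multiplying $s_i$ does not destroy integrability — concretely, $\dot\varrho_1(t)/\varrho_1(t)=h_1/(T_1+t_0-t)$ has a non-integrable singularity, but it multiplies $s_i$ which I would need to bound by the decaying $\|\sum_k s_k\|$ plus cross terms; the cleanest route is to bound $\|s_i(t)\|$ directly from $\dot s_i=-(\kappa_1+\dot\varrho_1/\varrho_1)s_i$, giving $s_i(t)=\varrho_1(t)^{-2}\mathrm e^{-\kappa_1(t-t_0)}s_i(t_0)$ for \emph{each} $i$ individually (the equation decouples!), so $\|s_i(t)\|\le\varrho_1(t)^{-2}\|s_i(t_0)\|\to 0$, and then $(\dot\varrho_1/\varrho_1)\|s_i\|\le (h_1/(T_1+t_0-t))\cdot(T_1+t_0-t)^{2h_1}T_1^{-2h_1}\|s_i(t_0)\|$ which is integrable on $[t_0,t_0+T_1)$ provided $2h_1-1>-1$, i.e., always. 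Hence $x_i(t)$ stays bounded up to $t_0+T_1$, $V(t_0+T_1)<\infty$, and the second phase starts from a finite value, completing the argument. I expect the bookkeeping that makes the $\dot V\le -(\text{const}+2\dot\varrho_2/\varrho_2)V$ inequality come out cleanly on $[t_0+T_1,\infty)$ — in particular correctly using $\sum_i s_i=0$ together with Lemmas \ref{lem5} and \ref{lem6} to absorb the consensus terms — to be the main obstacle, since it is where the strong convexity constants $\gamma_i$, the Hessian bounds $\Gamma_i$, and the algebraic connectivity $\lambda_2(L)$ all have to be balanced against the tuning parameters $c,\kappa_2$.
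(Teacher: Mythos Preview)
Your two-phase skeleton matches the paper exactly: show each $s_i\to 0$ on $[t_0,t_0+T_1)$ via the decoupled ODE $\dot s_i=-(\kappa_1+\dot\varrho_1/\varrho_1)s_i$, then run the standard ZGS Lyapunov argument on $[t_1,t_1+T_2)$ with the function $V_M(t)=\sum_i\big(f_i(x^\ast)-f_i(x_i)-\nabla^\top f_i(x_i)(x^\ast-x_i)\big)$. Your boundedness analysis on the first interval is more careful than the paper's, which simply takes for granted that $x_i(t_1)$ is finite.

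There is, however, a genuine computational error in your phase-2 plan. The derivative you wrote, $\dot V=-\sum_i(\nabla f_i(x_i)-\nabla f_i(x^\ast))^\top\dot x_i$, is not the derivative of either Lyapunov candidate you listed. For the correct choice $V_M$ (your first option, and the paper's), one gets $\dot V_M=\sum_i(x_i-x^\ast)^\top\nabla^2 f_i(x_i)\,\dot x_i$; the whole point of this functional is that the Hessian appearing here cancels the $(\nabla^2 f_i)^{-1}$ in the dynamics, leaving the clean consensus form $\dot V_M=-c\kappa_2\frac{\dot\varrho_2}{\varrho_2}\,\boldsymbol{x}^\top(L\otimes I_n)\boldsymbol{x}$. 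With your formula the inverse Hessian would survive and the argument would stall. Relatedly, on $[t_1,\infty)$ you have each $s_i\equiv 0$ (not merely $\sum_i s_i=0$), so the $s_i$-term in \eqref{al1-1} drops out entirely; what you actually need from phase~1 for the Lyapunov estimate is the separate consequence $\sum_i\nabla f_i(x_i)=0$, which holds because $\sum_i\phi_i(t)\equiv 0$.

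The ``ZGS bookkeeping'' you allude to is a specific inequality you should make explicit: using $\sum_i\nabla f_i(x_i)=0$ and $\tilde f(\bar x)\ge\tilde f(x^\ast)$ one shows $V_M(t)\le\sum_i\big(f_i(\bar x)-f_i(x_i)-\nabla^\top f_i(x_i)(\bar x-x_i)\big)$, then Lemma~\ref{lem3} and Lemma~\ref{lem6} give $V_M(t)\le\frac{\Gamma_{\max}}{\lambda_2(L)}\boldsymbol{x}^\top(L\otimes I_n)\boldsymbol{x}$. Combining with $\dot V_M$ above yields $\dot V_M\le-\alpha_1\frac{\dot\varrho_2}{\varrho_2}V_M$ with $\alpha_1=\frac{\lambda_2(L)c\kappa_2}{\Gamma_{\max}}$; note there is no additive constant, so Lemma~\ref{lem7} does not apply directly. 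Instead one integrates $\frac{d}{dt}\big(\varrho_2^{\alpha_1}V_M\big)\le 0$ to obtain $V_M(t)\le V_M(t_1)\varrho_2^{-\alpha_1}(t)\to 0$.
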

\begin{proof}
	First, it is proven that the sliding manifold \eqref{al1-3} allows the satisfaction of the required initial condition $\sum_{i=1}^{N}\nabla f_{i}(x_{i}(t_{0}))=0$ within the time interval $T_{1}$ without the need to calculate the local minimizer $x^{\ast}_{i}$ of each $f_{i}(x)$.
	Define a Lyapunov function using $s_{i}$
	\begin{equation}\label{eq7}
		V_{i}(t)=\frac{1}{2}s_{i}^{\top}s_{i}.
	\end{equation}
	To ensure formula simplicity, the $(t)$ in $x_{i},s_{i}$ will be omitted. Differentiating $V_{i}(t)$ along \eqref{al1-3} yields
	\begin{equation}\label{eq8}
		\begin{array}{rl}
			\dot{V}_i(t) =&\hspace{-6pt} s_{i}^{\top}\dot{s}_{i} \\
			=&\hspace{-6pt} s_{i}^{\top}\big[ \nabla^{2}f_{i}(x_{i})\dot{x_{i}}\\
			&\hspace{-6pt}+c\kappa_{1} \frac{\dot{\varrho}_{2}(t)}{\varrho_{2}(t)} \sum\nolimits_{j=1}^{N} a_{i j}\left(x_{i}-x_{j}\right)\big] \\
			=&\hspace{-6pt} s_{i}^{\top}\big[ -(\kappa_{1}+\frac{\dot{\varrho}_{1}(t)}{\varrho_{1}(t)})s_{i}\\
			&\hspace{-6pt}-c\kappa_{2} \frac{\dot{\varrho}_{2}(t)}{\varrho_{2}(t)} \sum\nolimits_{j=1}^{N} a_{i j}\left(x_{i}-x_{j}\right) \\
			&\hspace{-6pt}+c\kappa_{2} \frac{\dot{\varrho}_{2}(t)}{\varrho_{2}(t)} \sum\nolimits_{j=1}^{N} a_{i j}\left(x_{i}-x_{j}\right)\big] \\
			=&\hspace{-6pt} -(\kappa_{1}+\frac{\dot{\varrho}_{1}(t)}{\varrho_{1}(t)})s_{i}^{\top}s_{i}\\
			=&\hspace{-6pt} -(2\kappa_{1}+2\frac{\dot{\varrho}_{1}(t)}{\varrho_{1}(t)})V_{i}(t).
		\end{array}
	\end{equation}
	Using Lemma \ref{lem7}, there is $\lim\nolimits _{t \rightarrow\left(t_{0}+T_{1}\right)^{-}} V_{i}(t)=0$ and $V(x, t) \equiv 0, \forall t \in\left[t_{0}+T_{1}, \infty\right)$, i.e., $\lim\nolimits _{t \rightarrow\left(t_{0}+T_{1}\right)^{-}} s_{i}=0$ and $s_{i} \equiv 0, \forall t \in\left[t_{0}+T_{1}, \infty\right)$. Then when $t\in [t_{0}+T_{1},+\infty)$ there is $s_{i}=0$. 
	
	Furthermore, since 
	\begin{equation}
		\begin{array}{l}
			\sum\nolimits_{i=1}^{N}c\phi_{i}(t) \\
			=\sum\nolimits_{i=1}^{N}c \int_{t_{0}}^{t}\left(\kappa_{2} \frac{\dot{\varrho}_{2}(\tau)}{\varrho_{2}(\tau)}\right) \sum\nolimits_{j=1}^{N} a_{i j}\left(x_{i}(\tau)-x_{j}(\tau)\right) \mathrm{d} \tau \\
			=0,
		\end{array}
	\end{equation}
	it is easy to obtain that
	$\sum_{i=1}^{N}s_{i}(t_{1})=\sum_{i=1}^{N}\nabla f_{i}(x_{i}(t_{1}))$. Now it is proven that the condition $\sum_{i=1}^{N}\nabla f_{i}(x_{i})=0$ can be achieved within time interval $T_{1}$ using \eqref{al1-3}.
	
	Next proof that all local state variables $x_{i},i=1,\ldots,N$ will converge to $x^{\ast}$ within the next time interval $[t_{1},t_{1}+T_{2})$. From the analysis form above, since $s_{i} \equiv 0, \forall t \in\left[t_{0}+T_{1}, \infty\right)$, \eqref{al1-1} is equivalent to
	\begin{equation}\label{al2-1} 
		\begin{array}{rl}
			\dot{x}_{i}=&\hspace{-6pt}\left(\nabla^{2} f_{i}\left(x_{i}(t)\right)\right)^{-1}\big[-c(\kappa_{2}\frac{\dot{\varrho}_{2}(t)}{\varrho_{2}(t)})\\
			 &\hspace{-6pt}\sum\nolimits_{j=1}^{N} a_{i j}\left(x_{i}-x_{j}\right)\big].
		\end{array}
	\end{equation}
	Define another Lyapunov function as
%	\begin{equation}\label{eq10}
%		\begin{array}{rl}
%			V_{M}(t)=&\hspace{-6pt} \sum\nolimits_{i=1}^{N}\big[f_{i}\left(x^{*}\right)-f_{i}\left(x_{i}\right)\\
%			&\hspace{-6pt}-\nabla^{\top} f_{i}\left(x_{i}\right)\left(x^{*}-x_{i}\right)\big].	
%		\end{array}
%	\end{equation}
\begin{equation}\label{eq10}
	\begin{array}{l}
		V_{M}(t)=\sum\nolimits_{i=1}^{N}\big[f_{i}\left(x^{*}\right)-f_{i}\left(x_{i}\right)-\nabla^{\top} f_{i}\left(x_{i}\right)\left(x^{*}-x_{i}\right)\big].	
	\end{array}
\end{equation}
	Differentiating $V_{M}(t)$ along \eqref{al2-1} yields
	\begin{equation}\label{eq11}
		\begin{array}{rl}
			\dot{V}_{M}(t) =&\hspace{-6pt} \sum\nolimits_{i=1}^{N}\left[-\nabla^{2} f_{i}\left(x_{i}\right) \dot{x}_i \left(x^{*}-x_{i}\right) \right] \\
			=&\hspace{-6pt} c\kappa_{2}\frac{\dot{\varrho}_{2}(t)}{\varrho_{2}(t)} \sum\nolimits_{i=1}^{N}\sum\nolimits_{j=1}^{N} a_{i j}\left(x^{*}-x_{i}\right)^{\top}\left(x_{i}-x_{j}\right) .			
		\end{array}
	\end{equation}
	Using Lemma \ref{lem5}, there is
	\begin{equation}\label{eq12}
		\begin{array}{rl}
			\dot{V}_{M}(t) =&\hspace{-6pt} -\frac{1}{2}c\kappa_{2}\frac{\dot{\varrho}_{2}(t)}{\varrho_{2}(t)} \sum\nolimits_{i=1}^{N}\sum\nolimits_{j=1}^{N} a_{i j}\left(x_{i}-x_{j}\right)^{\top}\left(x_{i}-x_{j}\right)	\\
			=&\hspace{-6pt} -c\kappa_{2}\frac{\dot{\varrho}_{2}(t)}{\varrho_{2}(t)}\boldsymbol{x}^{\top}(L\otimes I_{n}) \boldsymbol{x}.		
		\end{array}
	\end{equation}
	Define $\overline{x}=\frac{1}{N}\sum_{i=1}^{N}x_{i}$. From $\sum_{i=1}^{N}\nabla f_{i}(x_{i})=0$ and $\tilde{f}(\overline{x}) \ge \tilde{f}(x^{\ast})$, it can be obtained that
	\begin{equation}\label{eq13}
		\begin{array}{l}
			\sum\nolimits_{i=1}^{N}\left[f_{i}\left(\overline{x}\right)-f_{i}\left(x_{i}\right)-\nabla^{\top} f_{i}\left(x_{i}\right)\left(\overline{x}-x_{i}\right)\right]-V_{M}(t) \\
			= \sum\nolimits_{i=1}^{N}\left[ f_{i}\left(\overline{x}\right)-f_{i}\left(x^{\ast}\right)- \nabla^{\top} f_{i}\left(x_{i}\right)(\overline{x}-x^{\ast})\right] \\
			= \tilde{f}(\overline{x}) - \tilde{f}(x^{\ast}) - (\overline{x}-x^{\ast})^{\top}\sum\nolimits_{i=1}^{N}\nabla f_{i}\left(x_{i}\right) \\
			= \tilde{f}(\overline{x}) - \tilde{f}(x^{\ast}) \ge 0.
		\end{array}
	\end{equation}
	Then there is 
	\begin{equation}\label{eqtemp1}
		V_{M}(t) \leq \sum_{i=1}^{N}\left[f_{i}\left(\overline{x}\right)-f_{i}\left(x_{i}\right)-\nabla^{\top} f_{i}\left(x_{i}\right)\left(\overline{x}-x_{i}\right)\right].
	\end{equation}
	By Assumption \ref{Assumption2} and Lemma \ref{lem3}, it can be derived that $f_{i}\left(\overline{x}\right)-f_{i}\left(x_{i}\right)-\nabla^{\top} f_{i}\left(x_{i}\right)\left(\overline{x}-x_{i}\right) \leq \frac{\Gamma_{i}}{2}\left\|x_{i}-\overline{x}\right\|^{2}$. Taking $\Gamma_{\max}$ as $\max _{i \in \mathcal{V}}\left\{\Gamma_{i}\right\}$, there is $V_{M}(t) \leq \frac{\Gamma_{\max}}{2}\sum_{i=1}^{N}\left\|x_{i}-\overline{x}\right\|^{2}$. Using Lemma \ref{lem5} and \ref{lem6}, it can be derived that
	\begin{equation}\label{eq14}
		\begin{array}{rl}
			V_{M}(t) \leq &\hspace{-6pt} \frac{\Gamma_{\max}}{2}\sum\nolimits_{i=1}^{N}\left\|x_{i}-\overline{x}\right\|^{2}	\\
			=&\hspace{-6pt} \frac{\Gamma_{\max}}{2N^{2}}\sum\nolimits_{i=1}^{N}\left\|\sum\nolimits_{j=1}^{N}(x_{i}-x_{j})\right\|^{2} \\
			\leq &\hspace{-6pt} \frac{\Gamma_{\max}}{2N^{2}}N\sum\nolimits_{i=1}^{N}\sum\nolimits_{j=1}^{N}\left\|(x_{i}-x_{j})\right\|^{2} \\
			=&\hspace{-6pt} \frac{\Gamma_{\max}}{N}\boldsymbol{x}^{\top}(L_{\overline{\mathcal{G}}} \otimes I_{n})\boldsymbol{x} \\
			\leq &\hspace{-6pt} \frac{\Gamma_{\max}}{\lambda_{2}(L)}\boldsymbol{x}^{\top}(L \otimes I_{n})\boldsymbol{x}.
		\end{array}
	\end{equation}
	Combining \eqref{eq12} and \eqref{eq14}, there is
	\begin{equation}\label{eq15}
		\dot{V}_{M}(t) \leq -\frac{\lambda_{2}(L)c\kappa_{2}}{\Gamma_{\max}}\frac{\dot{\varrho}_{2}(t)}{\varrho_{2}(t)}V_{M}(t).
	\end{equation}
	Letting $\alpha_{1}=\frac{\lambda_{2}(L)c\kappa_{2}}{\Gamma_{\max}}$ and multiplying $\varrho_{2}^{\alpha_{1}}(t)$ on both sides of \eqref{eq15}, it can be obtained that $\varrho_{2}^{\alpha_{1}}(t)\dot{V}_{M}(t) \leq -\alpha_{1} \dot{\varrho}_{2}(t)\varrho_{2}^{\alpha_{1}-1}(t)V_{M}(t)$. Then there is $\frac{d(\varrho_{2}^{\alpha_{1}}(t)V_{M}(t))}{dt} \leq 0$ which can be derived as
	\begin{equation}
		V_{M}(t) \leq V_{M}(t_{0})\varrho_{2}^{-\alpha_{1}}(t),
	\end{equation}
	which satisfies the conditions \eqref{PTC} in Definition \ref{def1}. Then there is $\lim\nolimits _{t \rightarrow\left(t_{1}+T_{2}\right)^{-}} V_{M}(t)=0$ and $V_{M}(t) \equiv 0, \forall t \in\left[t_{1}+T_{2}, \infty\right)$, i.e., $\lim\nolimits _{t \rightarrow\left(t_{1}+T_{2}\right)^{-}} x_{i}(t)=x^{\ast}$ and $x_{i}(t) \equiv x^{\ast}, \forall t \in\left[t_{1}+T_{2}, \infty\right)$. The proof is now complete.
\end{proof}
\begin{rem}
	In summary, the proposed MS-PTZGS demonstrates excellent prescribed-time convergence performance. Compared to \cite{Chen:2022IS}, this algorithm does not require condition \eqref{ZGScon1}, and compared to LMFZGS in \cite{Guo:2023Auto}, it achieves prescribed-time convergence performance far exceeding fixed-time convergence performance. However, this algorithm cannot guarantee the absence of condition \eqref{ZGScon2} in all stages, only in the first stage. The upcoming single-stage algorithm, to be introduced shortly, will address this issue.
\end{rem}
\begin{rem}
	The MS-PTZGS algorithm proposed here belongs to the implicit multi-stage algorithm mentioned earlier, which still maintains the simplicity of a similar single-stage algorithm (i.e., it does not have to run each stage individually) in terms of algorithmic operation as compared to the explicit multi-stage algorithm.
\end{rem}
%The idea of this work is based on two relevant works concerning LMFZGS\cite{Guo:2023Auto} and prescribed-time ZGS\cite{Chen:2022IS}.
\subsection{SS-PTZGS algorithm: single-stage ZGS with prescribed-time performance}
To solve problem \eqref{problem1} without requiring both \eqref{ZGScon1} and \eqref{ZGScon2}, the following single-stage PTDO algorithm using sliding mode control method is proposed,
\begin{subequations}\label{al2}
	\begin{alignat}{1}
		\dot{x}_{i}(t)=&\left(\nabla^{2} f_{i}\left(x_{i}(t)\right)\right)^{-1}\big(\kappa_{1} \frac{\dot{\varrho}_{1}(t)}{\varrho_{1}(t)}\big)\big(-\kappa_{2} s_{i}(t)\notag\\
		& -c \sum\nolimits_{j=1}^{N} a_{i j}\left(x_{i}(t)-x_{j}(t)\right)\big),\label{al2_1} \\
		s_{i}(t)=&c \int_{0}^{t}\big(\kappa_{1} \frac{\dot{\varrho_{1}}(\tau)}{\varrho_{1}(\tau)}\big) \sum\nolimits_{j=1}^{N} a_{i j}\left(x_{i}(\tau)-x_{j}(\tau)\right) \mathrm{d} \tau \notag \\
		& +\nabla f_{i}\left(x_{i}(t)\right),\label{al2_2}
	\end{alignat}
\end{subequations}
where $c,\kappa_{1},\kappa_{2}>0$ are tuning parameters, $\varrho_{1}(t)$ is the only used time-scaling function defined in \eqref{mu1}, $s_{i}(t)$ is the distributed integral sliding manifold for agent $i$ to avoid additional local minimization to satisfy the requied initial condition \eqref{ZGScon1}. Denote $\boldsymbol{s}=[s_1^{\top},s_2^{\top},\cdots,s_N^{\top}]^{\top} \in \mathbb{R}^{nN}$ and $\boldsymbol{x}^{\ast}=\mathbf{1}_{N}\otimes x^{\ast} \in \mathbb{R}^{nN}$.

\begin{rem}
	There are two main motivations behind the construction of this single-stage structured PTDO, i.e.,
	\begin{itemize}
		\item [1)] currently, multi-stage non-prescribed-time convergence algorithms have the drawback of significant disparity between theoretical convergence time and practical convergence time (usually the former being larger). This inability to accurately estimate convergence time is not conducive to practical engineering applications.
		\item [2)] in the existing literature related to ZGS algorithm, articles either exhibit a multi-stage structure in algorithm design or convergence proofs. This is primarily due to the inherent difficulty in simultaneously achieving the goals of gradual gradient sum reduction to $0$ and achieving global convergence in a single stage.
	\end{itemize}
\end{rem}

\begin{table*}%\footnote{$s_{i}$ denotes the sliding mode function}\footnote{$\omega$ denotes another type of time-varying function.}
	\centering
	\caption{Comparison with existing ZGS algorithms}
	\label{tb1}
	\begin{tabular}{|cc|c|cc|c|cc|}
		\hline
		\multicolumn{2}{|c|}{\multirow{2}{*}{\textbf{Type}}}                           & \multirow{2}{*}{\textbf{Algorithm}} & \multicolumn{2}{c|}{\textbf{Protocals}}        & \multirow{2}{*}{\textbf{Convergence rate}} & \multicolumn{2}{c|}{\textbf{Is it not bound by}}                                         \\ \cline{4-5} \cline{7-8} 
		\multicolumn{2}{|c|}{}                                                         &                                     & \multicolumn{1}{c|}{$\zeta_{1}$} & $\zeta_{2}$ &                                            & \multicolumn{1}{c|}{\eqref{ZGScon1}} & \eqref{ZGScon2} \\ \hline
		\multicolumn{1}{|c|}{\multirow{6}{*}{Multi-stage}} & \multirow{3}{*}{Implicit} & MS-PTZGS                            & \multicolumn{1}{c|}{$(\kappa_{1}+  \frac{\dot{\varrho}_{1}}{\varrho_{1}})s_{i}$}            & $c(\kappa_{2}\frac{\dot{\varrho}_{2}}{\varrho_{2}}) \sum\limits_{j=1}^{N} a_{i j}\left(x_{i}-x_{j}\right)$            & prescribed-time                            & \multicolumn{1}{c|}{yes}                              & no                               \\ \cline{3-8} 
		\multicolumn{1}{|c|}{}                             &                           & Algorithm in \cite{Shi:2023CSL}                         & \multicolumn{1}{c|}{$\varphi_{i}\left(z_{i}\right)+\frac{\partial \nabla f_{i}\left(x_{i}, t\right)}{\partial t}$}            & $\alpha\sum\limits_{j=1}^{N} a_{i j} \operatorname{sgn}\left(x_{i}-x_{j} \right)$            & finite-time                                & \multicolumn{1}{c|}{yes}                              & yes                              \\ \cline{3-8} 
		\multicolumn{1}{|c|}{}                             &                           & Algorithm in \cite{Guo:2023Auto}                        & \multicolumn{1}{c|}{$\begin{array}{rl}
				&\hspace{-6pt}k_{1} \operatorname{sig}^{p_{1}}\left(s_{i}\right)\\+&\hspace{-6pt}k_{2} \operatorname{sig}^{q_{1}}\left(s_{i}\right)
			\end{array}$}            & $\begin{array}{rl}
			&\hspace{-6pt}c_{1} \sum\limits_{j=1}^{N} a_{i j} \operatorname{sig}^{p_{2}}\left(x_{i}-x_{j}\right)\\+&\hspace{-6pt}c_{2} \sum\limits_{j=1}^{N} a_{i j} \operatorname{sig}^{q_{2}}\left(x_{i}-x_{j}\right)
		\end{array}$            & fixed-time                                 & \multicolumn{1}{c|}{yes}                              & no                               \\ \cline{2-8} 
		\multicolumn{1}{|c|}{}                             & \multirow{3}{*}{Explicit} & Algorithm in \cite{Chen:2022IS}                         & \multicolumn{1}{c|}{$\left(k_{1} \frac{\dot{\varrho}_{1}}{\varrho_{1}}\right) y_{i}(t_{\sigma}^{i})$}            & $\begin{array}{rl}
			&\hspace{-6pt}(k_{2} \frac{\dot{\varrho}_{2}}{\varrho_{2}})\big(\sum\limits_{j=1}^{N} a_{i j}(x_{i}\left(t_{k}^{i}\right)\\&\hspace{-6pt}-x_{j}(t_{k^{\prime}}^{j}))\big)
		\end{array}$            & prescribed-time                            & \multicolumn{1}{c|}{yes}                              & no                               \\ \cline{3-8} 
		\multicolumn{1}{|c|}{}                             &                           & Algorithm in \cite{Chen:2023TCNS}                       & \multicolumn{1}{c|}{$\dot{\omega}_{1}\nabla f_{i}\left(x_{i}(t_{0})\right)$}            & $\begin{array}{l}
			(\frac{\dot{\omega}_{2}}{1-\omega_{2}+\beta}+1)\times  \\
			\sum\limits_{j=1}^{N} a_{i j}\left(\hat{x}_{j}^{i}-\hat{x}_{i}^{j}\right)
		\end{array}$            & prescribed-time                            & \multicolumn{1}{c|}{yes}                              & no                               \\ \cline{3-8} 
		%\multicolumn{1}{|c|}{}                             &                           & Algorithm in \cite{Shi:2023A}                           & \multicolumn{1}{c|}{$\left(c_{i}+\frac{\dot{\varrho}_{1}}{\varrho_{1}}\right) y_{i}$}            & $(d+\kappa\frac{\dot{\varrho}_{2}}{\varrho_{2}}) \sum\limits_{j=1}^{N} a_{i j}\left(x_{i}-x_{j}\right)$            & prescribed-time                            & \multicolumn{1}{c|}{yes}                              & no                               \\ \hline
		\hline \multicolumn{2}{|c|}{\multirow{6}{*}{Single-stage}}                            & SS-PTZGS                            & \multicolumn{1}{c|}{$ \kappa_{1}\kappa_{2} \frac{\dot{\varrho}_{1}}{\varrho_{1}} s_{i}$}            & $c(\kappa_{1}\frac{\dot{\varrho}_{1}}{\varrho_{1}}) \sum\limits_{j=1}^{N} a_{i j}\left(x_{i}-x_{j}\right)$            & prescribed-time                            & \multicolumn{1}{c|}{yes}                              & yes                              \\ \cline{3-8} 
		\multicolumn{2}{|c|}{}                                                         & Algorithm in \cite{Wu:2022SMCS}                         & \multicolumn{1}{c|}{not designed}            & $\begin{array}{rl}
			&\hspace{-6pt}\alpha_{1} \sum\limits_{j=1}^{N} a_{i j} \operatorname{sig}^{p}\left(x_{i}-x_{j}\right) \\
			+&\hspace{-6pt}\alpha_{2}\sum\limits_{j=1}^{N} a_{i j} \operatorname{sig}^{q}\left(x_{i}-x_{j}\right)
		\end{array}$            & finite-time                                & \multicolumn{1}{c|}{no}                               & no                               \\ \cline{3-8} 
		\multicolumn{2}{|c|}{}                                                         & Algorithm in \cite{Zhao:2021TF}                         & \multicolumn{1}{c|}{not designed}            & $\sum\limits_{j=1}^{N}a_{i j}\left(\hat{x}_{j}(t-\tau)-\hat{x}_{i}(t-\tau)\right)$            & not guaranteed                             & \multicolumn{1}{c|}{no}                               & no                               \\ \cline{3-8} 
		\multicolumn{2}{|c|}{}                                                         & Algorithm in \cite{Chen:2016Auto}                       & \multicolumn{1}{c|}{not designed}            & $\gamma\sum\limits_{j=1}^{N}a_{i j}\left(\hat{x}_{j}-\hat{x}_{i}\right)$            & not guaranteed                             & \multicolumn{1}{c|}{no}                               & no                               \\ \cline{3-8} 
		\multicolumn{2}{|c|}{}                                                         & Algorithm in \cite{Chen:2023ISAS}                       & \multicolumn{1}{c|}{not designed}            & $\sum\limits_{j=1}^{N}a_{i j}\left(\theta_{i}+\theta_{j}\right)\left(x_{j}-x_{i}\right)^{-1}$            & fixed-time                                 & \multicolumn{1}{c|}{no}                               & yes                              \\ \cline{3-8} 
		\multicolumn{2}{|c|}{}                                                         & Algorithm in \cite{Yu:2021ACC}                          & \multicolumn{1}{c|}{$c_{2}y_{i}$}            & $c_{1} \sum\limits_{j=1}^{N} a_{i j}\left(x_{i}-x_{j}\right)$            & Exponential                                & \multicolumn{1}{c|}{yes}                              & yes                              \\ \hline
	\end{tabular}
\end{table*}

%\subsection{Convergence analysis: prescribed-time convergence guaranteed}
\begin{thm}\label{thm2}
	Under Assumptions \ref{Assumption1} and \ref{Assumption2}, for any initial value $x_i(t_{0})$, the proposed algorithm \eqref{al2} can solve the optimization problem \eqref{problem1} within a single prescribed time interval $T_{1}$, i.e., $\lim\nolimits _{t \rightarrow\left(t_{0}+T_{1}\right)^{-}}x_{i}=x^{\ast}$ where $x^{\ast} \in \mathbb{R}^{n}$ is the global minimizer.
\end{thm}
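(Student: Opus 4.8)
The plan is to carry out, within the single window $[t_0,t_0+T_1)$, a compressed form of the two phases in the proof of Theorem~\ref{thm1}: the integral manifold $s_i$ is no longer driven to zero \emph{before} optimization begins, but is kept as a vanishing perturbation of one combined Lyapunov function while the states converge.

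I would first analyze the manifold dynamics. Differentiating \eqref{al2_2} and substituting $\dot x_i$ from \eqref{al2_1}, the neighbor sums cancel and one obtains $\dot s_i=-\kappa_1\kappa_2\frac{\dot\varrho_1(t)}{\varrho_1(t)}s_i$; since $\varrho_1(t_0)=1$ and $s_i(t_0)=\nabla f_i(x_i(t_0))$, this integrates to $s_i(t)=\nabla f_i(x_i(t_0))\,\varrho_1(t)^{-\kappa_1\kappa_2}$, so that $\|\boldsymbol s(t)\|^2=\|\nabla f(\boldsymbol x(t_0))\|^2\varrho_1(t)^{-2\kappa_1\kappa_2}\to 0$ as $t\to(t_0+T_1)^-$ and $\boldsymbol s\equiv 0$ afterwards. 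Since $\mathcal G$ is undirected ($\mathbf 1_N^\top L=0$), the integral part of each $s_i$ sums to zero, hence $\sum_{i=1}^N s_i(t)=\sum_{i=1}^N\nabla f_i(x_i(t))$ for every $t$ -- the ZGS invariant \eqref{ZGScon1}, now holding only in the limit.

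Next I would set up the Lyapunov analysis. Keeping $V_M$ from \eqref{eq10}, Lemma~\ref{lem3} gives $\frac{\gamma_{\min}}{2}\|\boldsymbol x-\boldsymbol x^*\|^2\le V_M$ with $\gamma_{\min}=\min_i\gamma_i$, and repeating \eqref{eq13}--\eqref{eq14} with $\sum_i\nabla f_i(x_i)=\sum_i s_i$ in place of $0$ produces the corrected estimate $V_M\le\frac{\Gamma_{\max}}{\lambda_2(L)}\boldsymbol x^\top(L\otimes I_n)\boldsymbol x+(\overline{x}-x^*)^\top\sum_i s_i$. Differentiating $V_M$ along \eqref{al2} and using Lemma~\ref{lem5} yields $\dot V_M=\kappa_1\kappa_2\frac{\dot\varrho_1}{\varrho_1}\sum_i(x^*-x_i)^\top s_i-c\kappa_1\frac{\dot\varrho_1}{\varrho_1}\boldsymbol x^\top(L\otimes I_n)\boldsymbol x$. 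I would then take the composite candidate $V=V_M+\frac b2\sum_{i=1}^N s_i^\top s_i$, whose derivative acquires the additional term $-b\kappa_1\kappa_2\frac{\dot\varrho_1}{\varrho_1}\sum_i\|s_i\|^2$. The main work is to fix the analysis constant $b>0$ so that $\dot V\le-\gamma_0\frac{\dot\varrho_1}{\varrho_1}V$ for some $\gamma_0>0$: bound the sign-indefinite cross term via Cauchy--Schwarz together with $\|\boldsymbol x-\boldsymbol x^*\|^2\le\frac2{\gamma_{\min}}V_M$ and Young's inequality, and split the correction $(\overline{x}-x^*)^\top\sum_i s_i$ in the same way; for every fixed $c,\kappa_1,\kappa_2>0$ the $V_M$-parts so generated are dominated by the $-\frac{c\kappa_1\lambda_2(L)}{\Gamma_{\max}}\frac{\dot\varrho_1}{\varrho_1}V_M$ obtained from $-c\kappa_1\frac{\dot\varrho_1}{\varrho_1}\boldsymbol x^\top(L\otimes I_n)\boldsymbol x$, while the $\|s_i\|^2$-parts are absorbed by $-b\kappa_1\kappa_2\frac{\dot\varrho_1}{\varrho_1}\sum_i\|s_i\|^2$ once $b$ is chosen large enough.

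From $\dot V\le-\gamma_0\frac{\dot\varrho_1}{\varrho_1}V$, multiplying by the integrating factor $\varrho_1^{\gamma_0}$ and using $\varrho_1(t_0)=1$ gives $V(t)\le V(t_0)\varrho_1(t)^{-\gamma_0}$, a bound of the kind treated by Lemma~\ref{lem7}; hence $V\to 0$, so $V_M\to 0$ and $x_i\to x^*$ as $t\to(t_0+T_1)^-$, while for $t\ge t_0+T_1$ one has $\dot\varrho_1\equiv 0$, forcing $\dot x_i\equiv 0$ and $x_i\equiv x^*$ by continuity, which verifies Definition~\ref{def2}. The step I expect to be the obstacle is the parameter balancing: unlike in Theorem~\ref{thm1}, where $s_i$ is identically zero throughout the optimization phase, here the manifold residual is present for all $t<t_0+T_1$, and it, together with the deficit it introduces into the consensus-to-optimality inequality \eqref{eq14}, must be absorbed simultaneously and uniformly in the tuning gains -- the quadratic term $\frac b2\sum_i s_i^\top s_i$ with a sufficiently large analysis constant $b$ is precisely the device that makes this work, after which the integrating-factor step and the frozen dynamics for $t\ge t_0+T_1$ parallel the end of the proof of Theorem~\ref{thm1}.
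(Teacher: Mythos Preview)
Your proposal is correct and follows essentially the same architecture as the paper: the same combined Lyapunov candidate $V_M+\tfrac{b}{2}\|\boldsymbol s\|^2$ (the paper writes $p$ for your analysis constant $b$), the same reduction $\dot s_i=-\kappa_1\kappa_2\tfrac{\dot\varrho_1}{\varrho_1}s_i$, the same use of Young's inequality to absorb the cross term $\boldsymbol s^\top(\boldsymbol x-\boldsymbol x^\ast)$, and the same integrating-factor finish.

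The one technical difference is how the Laplacian quadratic form is tied back to the Lyapunov function. The paper bounds $V_S$ from above via the $\psi_i$-smoothness part of Assumption~\ref{Assumption2}, obtaining $V_S\le\tfrac{\Psi_{\max}}{2}\|\boldsymbol x-\boldsymbol x^\ast\|^2+\tfrac{p}{2}\|\boldsymbol s\|^2$, and then uses the spectral estimate $(\boldsymbol x-\boldsymbol x^\ast)^\top(L\otimes I_n)(\boldsymbol x-\boldsymbol x^\ast)\ge\lambda_2(L)\|\boldsymbol x-\boldsymbol x^\ast\|^2$ directly; you instead reuse the consensus-to-optimality estimate \eqref{eq14} from Theorem~\ref{thm1} in its corrected form $V_M\le\tfrac{\Gamma_{\max}}{\lambda_2(L)}\boldsymbol x^\top(L\otimes I_n)\boldsymbol x+(\bar x-x^\ast)^\top\sum_i s_i$, together with the strong-convexity lower bound $\|\boldsymbol x-\boldsymbol x^\ast\|^2\le\tfrac{2}{\gamma_{\min}}V_M$. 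Your route never invokes $\psi$-smoothness and keeps the argument closer to Theorem~\ref{thm1}, at the price of an additional residual term $(\bar x-x^\ast)^\top\sum_i s_i$ that must also be absorbed into $\tfrac{b}{2}\|\boldsymbol s\|^2$; the paper's route is shorter but leans on the full strength of Assumption~\ref{Assumption2}.
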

\begin{proof}
		Similarly, to ensure formula simplicity, the $(t)$ in $x_{i},s_{i}$ will be omitted. First, it can be obtained from \eqref{al2} for $s_{i}$ that
	\begin{equation}\label{eq24}
		\begin{array}{rl}
			\dot{s}_{i}=&\hspace{-6pt} \nabla^{2}f_{i}(x_{i})\dot{x}_{i}+c\left(\kappa_{1} \frac{\dot{\varrho_{1}}(t)}{\varrho_{1}(t)}\right) \sum\nolimits_{j=1}^{N} a_{i j}\left(x_{i}-x_{j}\right)\\
			=&\hspace{-6pt} -\left(\kappa_{1} \frac{\dot{\varrho}_{1}(t)}{\varrho_{1}(t)}\right)\big(\kappa_{2} s_{i}+c \sum\nolimits_{j=1}^{N} a_{i j}\left(x_{i}-x_{j}\right)\big)\\
			&\hspace{-6pt}+c\left(\kappa_{1} \frac{\dot{\varrho_{1}}(t)}{\varrho_{1}(t)}\right) \sum\nolimits_{j=1}^{N} a_{i j}\left(x_{i}-x_{j}\right)\\
			=&\hspace{-6pt} -\left(\kappa_{1} \frac{\dot{\varrho}_{1}(t)}{\varrho_{1}(t)}\right)\kappa_{2} s_{i}.
		\end{array}
	\end{equation}
	Define another Lyapunov function as
	\begin{equation}\label{eq25}
		\begin{array}{rl}
			V_{S}(t)=&\hspace{-6pt} \frac{1}{2}p\boldsymbol{s}^{\top}\boldsymbol{s}+\sum\nolimits_{i=1}^{N}\big[f_{i}\left(x^{*}\right)-f_{i}\left(x_{i}\right)\\
			&\hspace{-6pt}-\nabla^{\top} f_{i}\left(x_{i}\right)\left(x^{*}-x_{i}\right)\big]\\
			%&\hspace{-6pt}+\frac{1}{2}p\boldsymbol{s}^{\top}\boldsymbol{s}.	
		\end{array}
	\end{equation}
	Taking $\Psi_{\max}$ as $\max\limits_{i \in \mathcal{V}}\left\{\psi_{i}\right\}$, from Assumption \ref{Assumption2}, there is
	\begin{equation}\label{eq26}
		\begin{array}{rl}
			V_{S}(t)\leq&\hspace{-6pt} \frac{\Psi_{\max}}{2}\left\|\boldsymbol{x}-\boldsymbol{x}^{\ast}\right\|^{2}+\frac{p}{2}\left\|\boldsymbol{s}\right\|^{2}.	
		\end{array}
	\end{equation}
	Differentiating $V_{S}(t)$ along \eqref{al2} yields
	\begin{equation}\label{eq27}
		\begin{array}{rl}
			\dot{V}_{S}(t) =&\hspace{-6pt} \sum\nolimits_{i=1}^{N}\left[-\nabla^{2} f_{i}\left(x_{i}\right) \dot{x}_i \left(x^{*}-x_{i}\right) \right]+p\boldsymbol{s}^{\top}\dot{\boldsymbol{s}} \\
			=&\hspace{-6pt} \kappa_{1}\frac{\dot{\varrho}_{1}(t)}{\varrho_{1}(t)} \sum\nolimits_{i=1}^{N}\left(x^{*}-x_{i}\right)^{\top}\big[c\sum\nolimits_{j=1}^{N} a_{i j}\left(x_{i}-x_{j}\right)\\
			&\hspace{-6pt}+ \kappa_{2}s_{i}\big]-\kappa_{1}\frac{\dot{\varrho}_{1}(t)}{\varrho_{1}(t)}p\kappa_{2}\boldsymbol{s}^{\top}\boldsymbol{s}\\
			=&\hspace{-6pt} -\kappa_{1}\frac{\dot{\varrho}_{1}(t)}{\varrho_{1}(t)}\big[c(\boldsymbol{x}-\boldsymbol{x}^{\ast})^{\top}(L\otimes I_{n})(\boldsymbol{x}-\boldsymbol{x}^{\ast})\\
			&\hspace{-6pt} +\kappa_{2}\boldsymbol{s}^{\top}(\boldsymbol{x}-\boldsymbol{x}^{\ast})\big]-\kappa_{1}\frac{\dot{\varrho}_{1}(t)}{\varrho_{1}(t)}p\kappa_{2}\boldsymbol{s}^{\top}\boldsymbol{s}.
		\end{array}
	\end{equation}
	Using Young's inequality, there exists $\delta >0$ such that 
	\begin{equation}\label{eq28}
		\begin{array}{rl}
			\dot{V}_{S}(t) \leq &\hspace{-6pt}	\kappa_{1}\frac{\dot{\varrho}_{1}(t)}{\varrho_{1}(t)}\big[-c\lambda_{2}(L)\left\|\boldsymbol{x}-\boldsymbol{x}^{\ast}\right\|^{2}-p\kappa_{2}\boldsymbol{s}^{\top}\boldsymbol{s} \\
			&\hspace{-6pt}+	\kappa_{2}\delta \boldsymbol{s}^{\top}\boldsymbol{s}	+ \frac{\kappa_{2}}{4\delta}\left\|\boldsymbol{x}-\boldsymbol{x}^{\ast}\right\|^{2} \big]\\
			=&\hspace{-6pt} -\kappa_{1}\frac{\dot{\varrho}_{1}(t)}{\varrho_{1}(t)}\big[ (c\lambda_{2}(L)-\frac{\kappa_{2}}{4\delta})\left\|\boldsymbol{x}-\boldsymbol{x}^{\ast}\right\|^{2}\\
			&\hspace{-6pt}+\kappa_{2}(p-\delta)\boldsymbol{s}^{\top}\boldsymbol{s} \big].
		\end{array}
	\end{equation}
	Assign values to $p$ and $\delta$ that satisfy the following condition,
	\begin{equation}\label{eq29}
		p>\delta>\frac{\kappa_{2}}{4c\lambda_{2}(L)}.
	\end{equation}
	Taking $\sigma_{S}=\min\left\{\frac{2c\lambda_{2}(L)}{\Psi_{\max}}-\frac{\kappa_{2}}{2\delta\Psi_{\max}},\frac{2\kappa_{2}(p-\delta)}{p}\right\}$ and recalling \eqref{eq26}, there is
	\begin{equation}\label{eq30}
		\dot{V}_{S}(t) \leq -\kappa_{1}\frac{\dot{\varrho}_{1}(t)}{\varrho_{1}(t)}\sigma_{S}V_{S}(t).
	\end{equation}
	Letting $\alpha_{2}=\kappa_{1}\sigma_{S}$ and multiplying $\varrho_{1}^{\alpha_{2}}(t)$ on both sides of \eqref{eq30}, it can be obtained that $\varrho_{1}^{\alpha_{2}}(t)\dot{V}_{S}(t) \leq -\alpha_{2} \dot{\varrho}_{1}(t)\varrho_{1}^{\alpha_{2}-1}(t)V_{S}(t)$. Then there is $\frac{d(\varrho_{1}^{\alpha_{2}}(t)V_{S}(t))}{dt} \leq 0$ which can be derived as
	\begin{equation}\label{eq31}
		V_{S}(t) \leq V_{S}(t_{0})\varrho_{1}^{-\alpha_{2}}(t),
	\end{equation}
	which satisfies the conditions \eqref{PTC} in Definition \ref{def1}. Then there is $\lim\nolimits _{t \rightarrow\left(t_{0}+T_{1}\right)^{-}} V_{S}(t)=0$ and $V_{S}(t) \equiv 0, \forall t \in\left[t_{0}+T_{1}, \infty\right)$, i.e., $\lim\nolimits _{t \rightarrow\left(t_{0}+T_{1}\right)^{-}} x_{i}(t)=x^{\ast}$ and $x_{i}(t) \equiv x^{\ast}, \forall t \in\left[t_{0}+T_{1}, \infty\right)$. The proof is now complete.
\end{proof}

\begin{rem}
	From the analysis above, it can be observed that the proposed SS-PTZGS simultaneously avoids the requirements of conditions \eqref{ZGScon1} and \eqref{ZGScon2} while achieving the objectives of making the gradient sum approach zero and achieving global convergence within a single prescribed-time interval. The approach to achieving this result is not overly complex. It involves adding the quadratic term of a sliding mode surface to the traditional Lyapunov function that guarantees global convergence. This ensures that each local variable, driven by the algorithm, gradually approaches both the sliding mode surface, ensuring the gradient sum becomes zero, and the global optimal solution.
\end{rem}

So far, the two algorithms proposed in this paper have both undergone a comprehensive analysis. To further illustrate and compare their distinctions and advantages compared to others' work, Table~\ref{tb1} summarizes the recent algorithmic developments related to the problems addressed, which also include the two algorithms proposed later in this paper.

\section{Simulation}\label{Section 4}
This section will verify the excellent prescribed-time convergence performance of the two proposed algorithms through a simulation. Consider an undirected graph with $N=6$ agents as shown in Fig.~\ref{fig1} and the six local objective functions for each agent are 
\begin{equation}\nonumber
	\begin{aligned}
		&f_1(x) &&\hspace{-10pt}= (x_{1}-1)^2+(x_{2}-2)^2, \, &&f_4(x) &&\hspace{-10pt}= x_{1}^2+2x_{2}^2, \\
		&f_2(x) &&\hspace{-10pt}= (x_{1}-3)^2+(x_{2}-4)^2, \, &&f_5(x) &&\hspace{-10pt}= 2x_{1}^2+x_{2}^2, \\
		&f_3(x) &&\hspace{-10pt}= (x_{1}-5)^2+(x_{2}-6)^2, \, &&f_6(x) &&\hspace{-10pt}= 3x_{1}^2+2x_{2}^2.
	\end{aligned}
\end{equation}
The selection of parameters for \eqref{al1} and \eqref{al2} is as follows.
\begin{table}[H]
	\centering
	\begin{tabular}{|c|c|c|c|c|c|c|c|}
		\hline
		Algorithm & $\kappa_{1}$ & $\kappa_{2}$ & $c$ & $T_{1}$ & $h_{1}$ & $T_{2}$ & $h_{2}$\\ \hline
		MS-PTZGS \eqref{al1}                  & 2           & 3           & 1          & 0.1s        & 3           & 0.2s        & 2.5         \\ \hline
		SS-PTZGS \eqref{al2}                 & 2           & 3           & 1          & 0.3s        & 2.3         &  \diagbox{}{}      & \diagbox{}{}          \\ \hline
	\end{tabular}
\end{table}
\begin{figure}[htbp]
	\centerline{\includegraphics[width=0.3\textwidth]{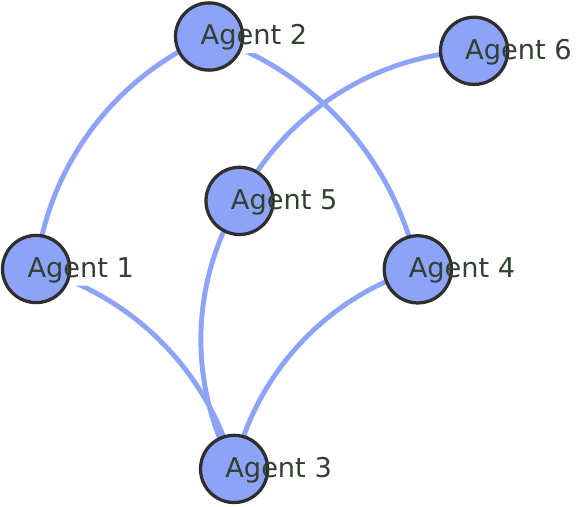}}
	\caption{Communication topology.}
	\label{fig1}
\end{figure}

Fig.~\ref{fig_xi} illustrates the temporal variations of individual components $x_{ij}$ of the local optimization variables $x_{i}$ for the two proposed algorithms. Fig.~\ref{fig_Re} further demonstrates the changes in residuals between each local optimization variable and the global optimum, where the residual is denoted by $\mathrm{Er}(x_{i}(t))=\frac{\left\|x_{i}(t)-x^{*}\right\|_{2}^{2}}{\left\|x_i(t_{0})-x^{*}\right\|_{2}^{2}}$. Subsequently, Fig.~\ref{fig_si} displays the temporal trends of the sliding mode surface term $s_{i}$, confirming the effectiveness of eliminating the requirement of \eqref{ZGScon1} using sliding mode control. Finally, Fig.~\ref{fig_y} depicts the evolution of the error between the global function value and the optimal value. From these four figures, the following key points can be summarized as follows,
\begin{itemize}
	\item [1)] both of the proposed algorithms successfully achieve global convergence within the prescribed time interval, and the small error between theoretical and actual convergence times is advantageous for meeting the requirements of real-world engineering applications that necessitate precise control over convergence time.
	\item [2)] MS-PTZGS, with its two-stage structure, exhibits a clear two-stage division in the simulation results, while the single-stage SS-PTZGS shows only one-stage convergence.
	\item [3)] the applied sliding mode surface term accomplishes the desired convergence towards zero within the prescribed time, fulfilling its expected role.
\end{itemize}
\begin{figure}[htbp]    % 常规操作\begin{figure}开头说明插入图片
		% 后面跟着的[htbp]是图片在文档中放置的位置，也称为浮动体的位置，关于这个我们后面的文章会聊聊，现在不管，照写就是了
		\centering            % 前面说过，图片放置在中间
		\subfloat[MS-PTZGS]
		{
			\label{fig_xi_M}\includegraphics[width=0.24\textwidth]{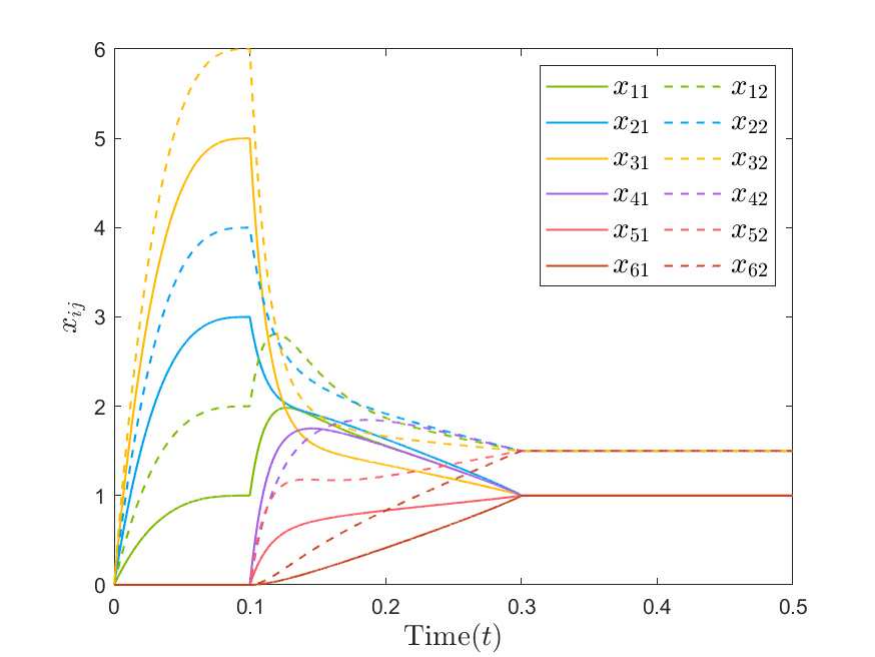}
		}
		\subfloat[SS-PTZGS]   % 第一张子图的下标（注意：注释要写在[]中括号内）
		{
			\label{fig_xi_S}\includegraphics[width=0.24\textwidth]{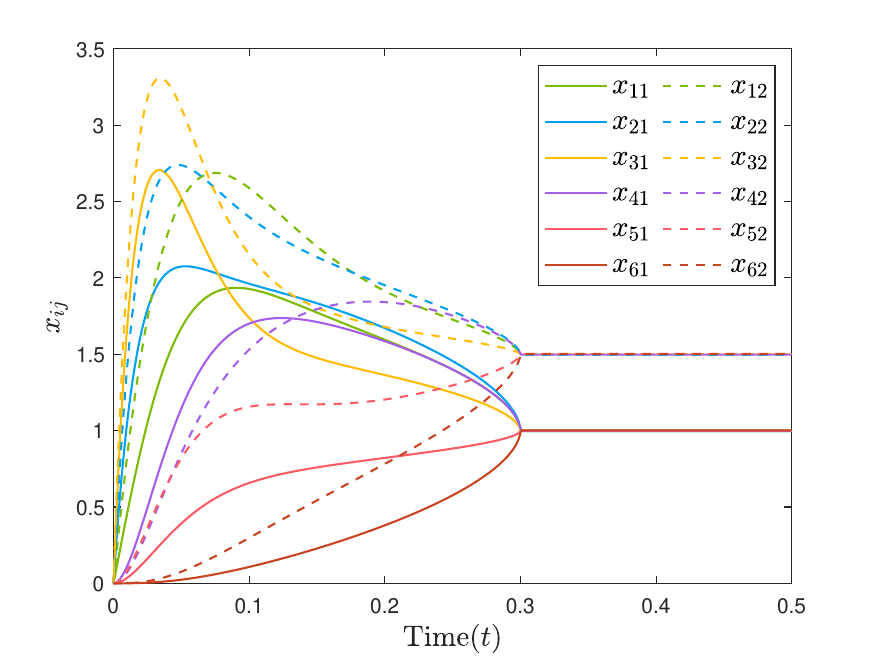}
		}
		\caption{Motion of each agent's state $x_{i}(t)$.}    % 整个图片的说明，注释写在{}内
		\label{fig_xi}            % 整个图片的标签编号，注意这里跟子图是一样的道理，标签不能重复 
\end{figure}

\begin{figure}[htbp]    % 常规操作\begin{figure}开头说明插入图片
		% 后面跟着的[htbp]是图片在文档中放置的位置，也称为浮动体的位置，关于这个我们后面的文章会聊聊，现在不管，照写就是了
		\centering            % 前面说过，图片放置在中间
		\subfloat[MS-PTZGS]
		{
			\label{fig_Re_M}\includegraphics[width=0.24\textwidth]{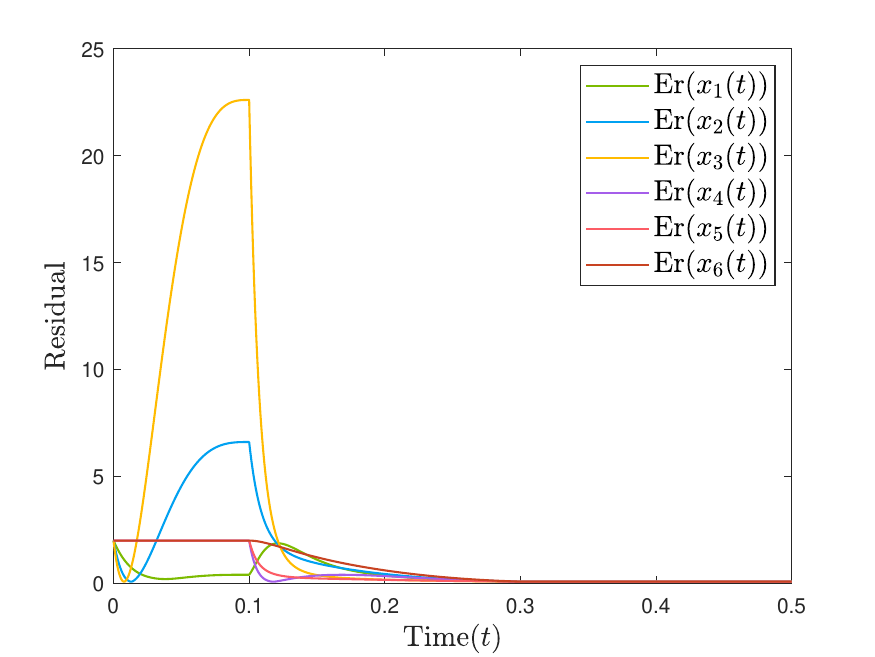}
		}
		\subfloat[SS-PTZGS]   % 第一张子图的下标（注意：注释要写在[]中括号内）
		{
			\label{fig_Re_S}\includegraphics[width=0.24\textwidth]{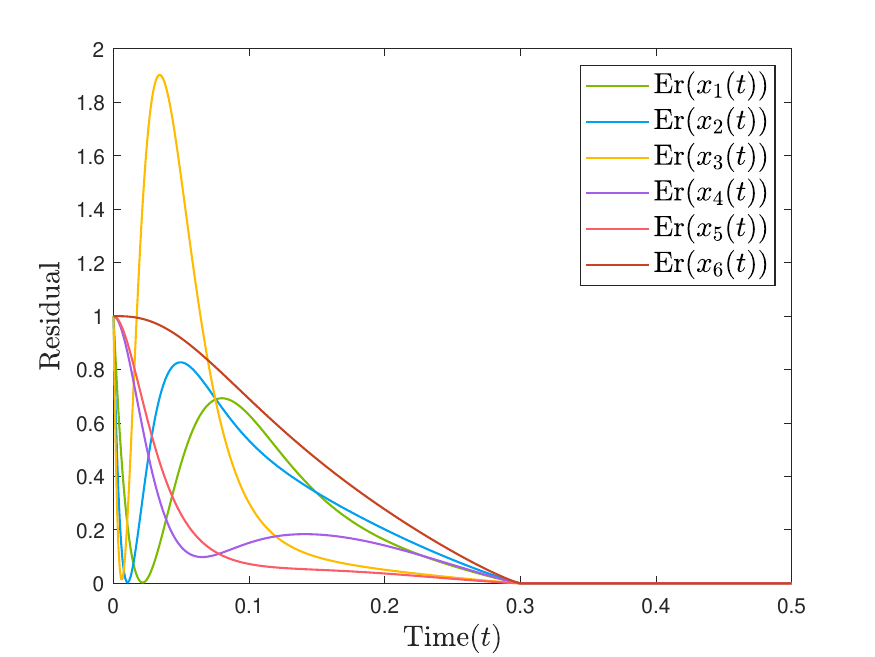}
		}
		\caption{Motion of each agent's normalized residual Er$(x_{i}(t))$.}    % 整个图片的说明，注释写在{}内
		\label{fig_Re}            % 整个图片的标签编号，注意这里跟子图是一样的道理，标签不能重复 
\end{figure}

\begin{figure}[htbp]    % 常规操作\begin{figure}开头说明插入图片
		% 后面跟着的[htbp]是图片在文档中放置的位置，也称为浮动体的位置，关于这个我们后面的文章会聊聊，现在不管，照写就是了
		\centering            % 前面说过，图片放置在中间
		\subfloat[MS-PTZGS]
		{
			\label{fig_si_M}\includegraphics[width=0.24\textwidth]{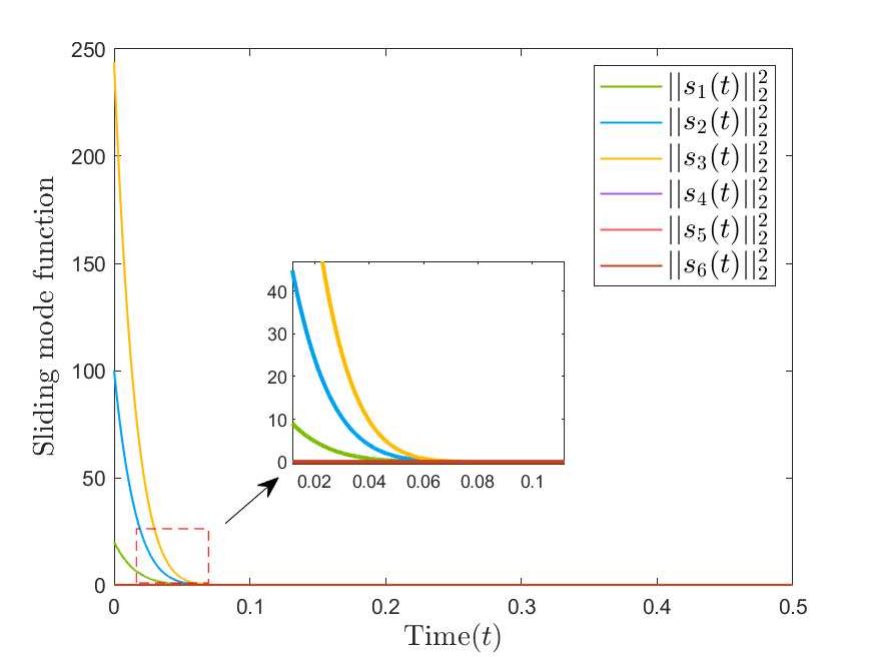}
		}
		\subfloat[SS-PTZGS]   % 第一张子图的下标（注意：注释要写在[]中括号内）
		{
			\label{fig_si_S}\includegraphics[width=0.24\textwidth]{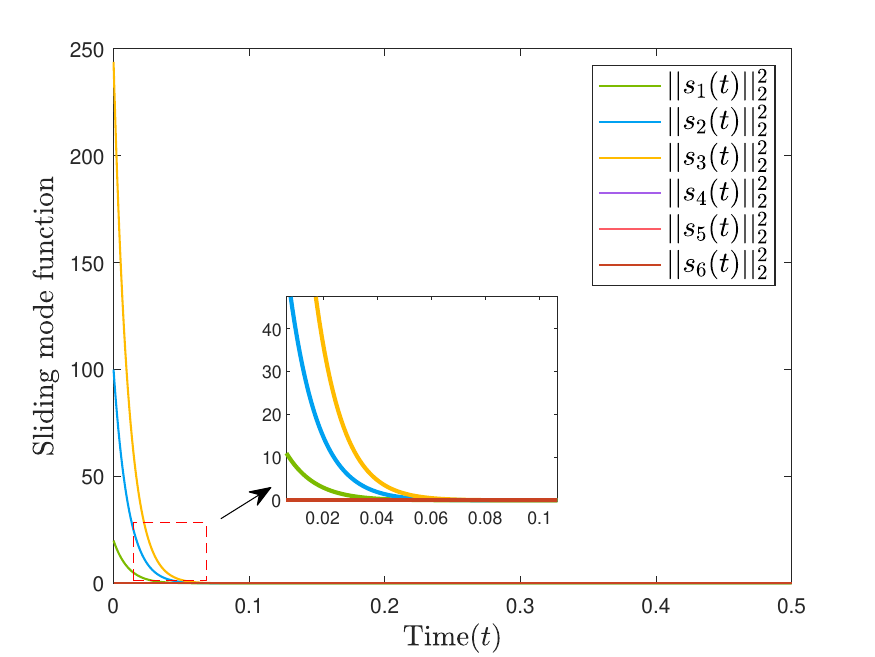}
		}
		\caption{Motion of each agent's sliding mode surface $s_{i}(t)$.}    % 整个图片的说明，注释写在{}内
		\label{fig_si}            % 整个图片的标签编号，注意这里跟子图是一样的道理，标签不能重复 
\end{figure}

\begin{figure}[htbp]    % 常规操作\begin{figure}开头说明插入图片
		% 后面跟着的[htbp]是图片在文档中放置的位置，也称为浮动体的位置，关于这个我们后面的文章会聊聊，现在不管，照写就是了
		\centering            % 前面说过，图片放置在中间
		\subfloat[MS-PTZGS]
		{
			\label{fig_y_M}\includegraphics[width=0.24\textwidth]{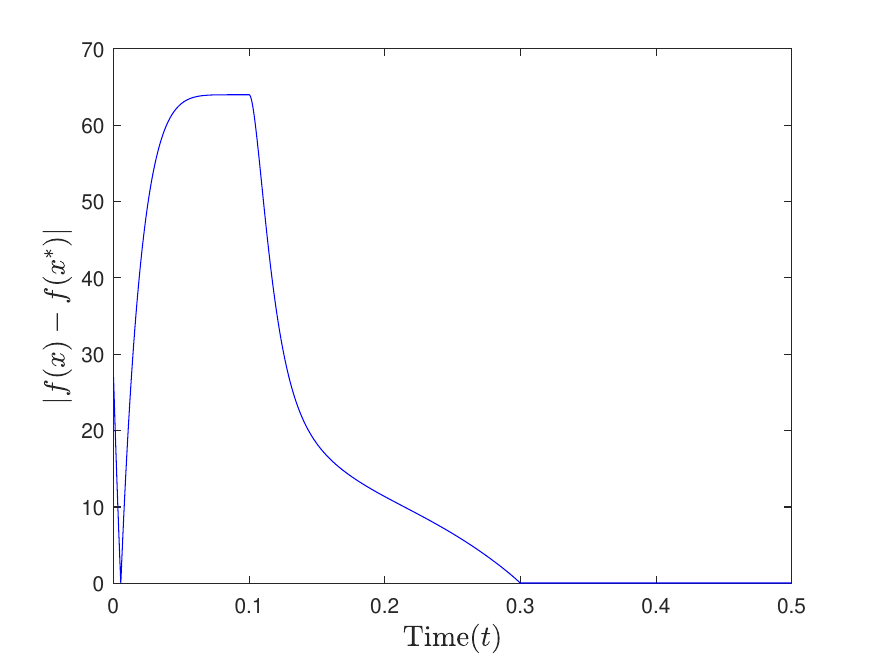}
		}
		\subfloat[SS-PTZGS]   % 第一张子图的下标（注意：注释要写在[]中括号内）
		{
			\label{fig_y_S}\includegraphics[width=0.24\textwidth]{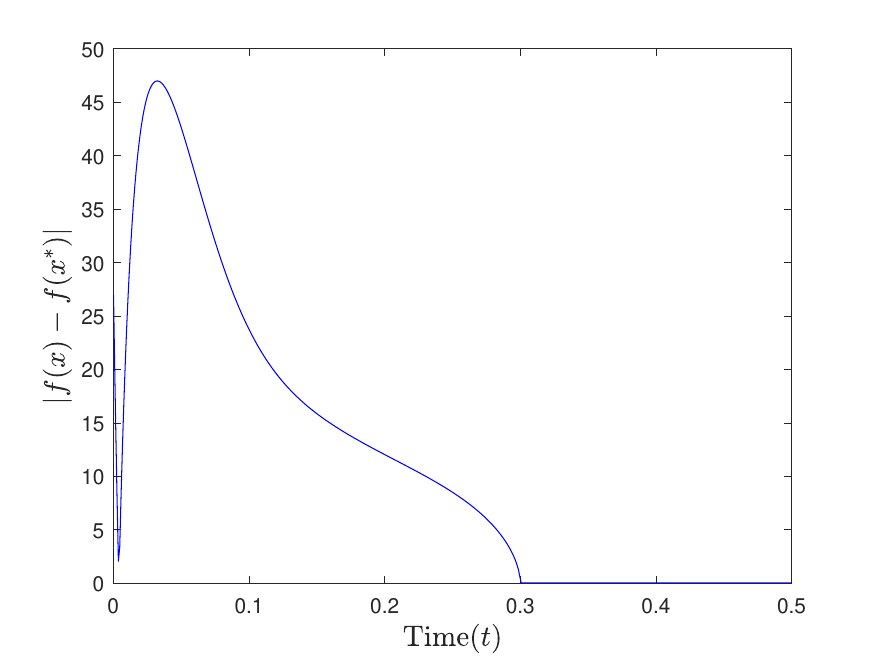}
		}
		\caption{Motion of the global function error $|f(\boldsymbol{x}(t))-f(x^{\ast})|$.}    % 整个图片的说明，注释写在{}内
		\label{fig_y}            % 整个图片的标签编号，注意这里跟子图是一样的道理，标签不能重复 
	\end{figure}

\section{Conclusions}\label{Section 5}
Starting from a unified perspective of multi-stage and single-stage structures, this paper constructs two ZGS algorithms that guarantee prescribed-time convergence performance. The proposed MS-PTZGS avoids the requirement for the gradient sum to be zero and exhibits significant improvements in overall applicability and convergence performance compared to other ZGS algorithms. Furthermore, the introduced SS-PTZGS breaks through the constraints in ZGS algorithm design, addressing the issue that previous algorithms of this kind could not simultaneously achieve gradient sum reduction to zero and global convergence within a single time interval. Finally, the excellent performance of the proposed algorithms is validated through simulation.

\bibliographystyle{IEEEtran}
\bibliography{database}

\end{document}